\theoremstyle{plain}
\newtheorem{theorem}{Theorem}[section]
\newtheorem{lemma}[theorem]{Lemma}
\newtheorem{conjecture}[theorem]{Conjecture}
\theoremstyle{definition}
\newtheorem{remark}[theorem]{Remark}
\newcommand{\R}{\mathbb{R}}
\newcommand{\Z}{\mathbb{Z}}
\newcommand{\conv}{\mathrm{conv}}
\newcommand{\dist}{\mathrm{dist}}
\newcommand{\Ind}{\mathrm{I}}
\newcommand{\KG}{\mathrm{KG}}
\begin{document}

\title[Fair splittings by independent sets]{Fair splittings by independent sets in sparse graphs}



\author[A.~Black \and U.~Cetin \and F.~Frick \and A.~Pacun \and L.~Setiabrata]{Alexander Black \and Umur Cetin \and Florian Frick \and Alexander Pacun \and Linus Setiabrata}

\address[AB, LS]{Dept.\ Math., Cornell University, Ithaca, NY 14853, USA}
\email{\{ab2776, ls823\}@cornell.edu} 

\address[UC]{Dept.\ Math., Bilkent University, 06800 Ankara, Turkey}
\email{sabriumurcetin@gmail.com} 

\address[FF]{Dept.\ Math.\ Sciences, Carnegie Mellon University, Pittsburgh, PA 15213, USA}
\email{frick@cmu.edu} 

\address[AP]{Math.\ Dept., Stony Brook University, Stony Brook, NY 11794, USA}
\email{alexander.pacun@stonybrook.edu}


\begin{abstract}
\small
Given a partition $V_1 \sqcup V_2 \sqcup \dots \sqcup V_m$ of the vertex set of a graph, we are interested in finding multiple disjoint independent sets that contain the correct fraction of vertices of each~$V_j$. We give conditions for the existence of $q$ such independent sets in terms of the topology of the independence complex. We relate this question to the existence of $q$-fold points of coincidence for any continuous map from the independence complex to Euclidean space of a certain dimension, and to the existence of equivariant maps from the $q$-fold deleted join of the independence complex to a certain representation sphere of the symmetric group. As a corollary we derive the existence of $q$ pairwise disjoint independent sets accurately representing the $V_j$ in certain sparse graphs for $q$ a power of a prime.
\end{abstract}

\date{September 10, 2018}
\maketitle

\section{Introduction}

Given a graph $G$ whose vertex set is partitioned into $V_1 \sqcup V_2 \sqcup \dots \sqcup V_m$ and $\varepsilon > 0$, we say that a set of vertices $S$ is a \emph{fair $\varepsilon$-representation} if $|S \cap V_j| \ge \lfloor \varepsilon \cdot |V_j| \rfloor$ for all~$j$ and that $S$ is an \emph{almost fair $\varepsilon$-representation} if $|S \cap V_j| \ge \lfloor \varepsilon\cdot |V_j| \rfloor - 1$ for all~$j$. Recently, Aharoni, Alon, Berger, Chudnovsky, Kotlar, Loebl, and Ziv~\cite{aharoni2017} studied the existence of (almost) fair representations by independent sets, that is, sets that do not contain both endpoints of any edge, for $\varepsilon = \frac{1}{\chi(G)}$. For example, they showed that for any partition of the vertex set of an $n$-cycle an almost fair $\frac12$-representation by an independent set always exists (even for odd~$n$).

Alishahi and Meunier~\cite{alishahi2017} showed that for any partition $V_1 \sqcup V_2 \sqcup \dots \sqcup V_m$ of the vertex set of a path there are two disjoint independent sets $S_1$ and $S_2$ that both are an almost fair $\frac12$-representation. That is, most of the vertex set of the path is split into two independent sets fairly representing the partition. Here we initiate more generally the study of (almost) fair splittings by independent sets for arbitrary graphs. Given a partition $V_1 \sqcup V_2 \sqcup \dots \sqcup V_m$ of the vertex set of a graph, we say that pairwise disjoint subsets $S_1, \dots, S_q \subset V$ are a \emph{fair splitting} of the partition if $|S_i \cap V_j| \ge \lfloor\frac{|V_j|}{q}\rfloor$ for all~$i$ and~$j$ and an \emph{almost fair splitting} if $|S_i \cap V_j| \ge \lfloor\frac{|V_j|+1}{q}\rfloor-1$ for all~$i$ and~$j$ and $|V_j \setminus \bigcup_i S_i| \le q-1$ for all~$j$. 

By induction Alishahi and Meunier can extend their almost fair splitting result for $q=2$ and a path. They are able to show that for the graph $G$ on vertex set $\{1,2,\dots,n\}$ with an edge $(i,j)$ if and only if $|i-j| < q$ and $i \ne j$, there is an almost fair splitting by $q$ independent sets provided that $q$ is a power of two. They actually show more, namely that the sets $S_i$ differ in cardinality by at most one. We will refer to this as a \emph{balanced} almost fair splitting. Balanced fair splittings are defined similarly.

We study the problem of fair splittings by independent sets in general sparse graphs and beyond parameters that are powers of two. We give two sufficient conditions for the existence of almost fair splittings by independent sets in terms of the topology of the independence complex~$\Ind(G)$ of~$G$, that is, the simplicial complex of all independent sets in~$G$. There is an almost fair splitting if $G$ is sufficiently sparse--and thus $\Ind(G)$ sufficiently dense--that any continuous map from $\Ind(G)$ into Euclidean space of an appropriate dimension exhibits a $q$-fold point. More precisely:

\begin{theorem}
\label{thm:main}
	Let $G$ be a graph on vertex set~$V$, and let $V_1 \sqcup V_2 \sqcup \dots \sqcup V_m$ be a partition of~$V$. Suppose the $V_j$ have cardinalities such that there are integers $q \ge 2$, $n \ge m+1$, and $k_1, \dots, k_m \ge 1$ with $|V_j| = qk_j-1$ and $|V| = (q-1)n+1$. Furthermore, suppose that for any continuous map $F\colon \Ind(G) \longrightarrow \R^{n-1}$ there are $q$ pairwise disjoint faces $S_1, \dots, S_q$ of $\Ind(G)$ with $F(S_1) \cap \dots \cap F(S_q) \ne \emptyset$. Then $G$ admits a balanced almost fair splitting by $q$ independent sets.
\end{theorem}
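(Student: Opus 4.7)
The plan is to convert the combinatorial problem into an instance of the topological hypothesis: construct an explicit continuous map $F \colon \Ind(G) \to \R^{n-1}$ encoding the partition, apply the hypothesis to produce $q$ pairwise disjoint faces of $\Ind(G)$ meeting at a common image point, and then read off the balanced almost fair splitting from the geometry of the coincidence.

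For the test map I would order the vertex set as $V = \{u_1, \ldots, u_N\}$ with $N = (q-1)n+1$ so that each color class $V_j$ occupies a consecutive block, and place $\phi(u_a) = (a, a^2, \ldots, a^{n-1})$ on the moment curve in $\R^{n-1}$. Extending affinely over each face of $\Ind(G)$ via the barycentric coordinates produces a well-defined continuous $F$. Applying the hypothesis yields pairwise disjoint faces $S_1, \ldots, S_q$ of $\Ind(G)$, each an independent set of $G$, together with a point $z \in \bigcap_i F(S_i)$.

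To extract the splitting I would appeal to the combinatorial structure of $q$-fold Tverberg coincidences on the moment curve. The statement I aim for is the following: for every interval $[a,b] \subseteq [N]$ and all $i, i'$, the counts $|S_i \cap [a,b]|$ and $|S_{i'} \cap [a,b]|$ differ by at most one. Applied to the block $V_j$ of length $qk_j-1$, this forces $|S_i \cap V_j| \in \{k_j-1, k_j\}$, giving the almost fair bound $|S_i \cap V_j| \ge k_j - 1$; summing over $i$ yields $|V_j \setminus \bigcup_i S_i| \le q-1$; and applying the claim to the full range $[1,N]$ gives $||S_i| - |S_{i'}|| \le 1$, which is the balance condition.

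The main obstacle is precisely this interleaving claim. For $q = 2$ it reduces to the classical fact that Radon partitions of moment curve points strictly alternate along the linear order, but for $q \ge 3$ the moment curve admits Tverberg partitions whose parts do not interleave tightly on every subinterval, so the naive bound can fail. Overcoming this will require either a delicate application of Gale's evenness condition to the sign pattern of the convex dependencies (possibly after a generic perturbation to avoid degenerate Tverberg points), or a redesign of the test map in which the color blocks sit on distinct affine subspaces of $\R^{n-1}$, using the slack $n \ge m+1$ to force per-block balance directly from the equality of image coordinates. Carrying out this combinatorial step is the main technical work of the proof.
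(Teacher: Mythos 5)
Your primary attempt has a genuine gap, which you already identify but do not close: the interval-by-interval interleaving claim is simply false for $q \ge 3$ on the moment curve, and the hypothesis of the theorem gives you no control at all over \emph{which} common-image-point configuration the abstract existence statement hands back. You cannot choose a favorable Tverberg partition; you only know one exists. So your extraction step collapses, and the proposal as written does not constitute a proof.

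The paper's proof follows your fallback suggestion, not your main line, and it avoids the moment curve entirely. It is the constraint method of Blagojevi\'c--Frick--Ziegler. For each $j$ one defines the subcomplex $\Sigma_j \subset \Delta^V$ of faces $\sigma$ with $|\sigma \cap V_j| \le k_j - 1$. Because $|V_j| = qk_j - 1 < qk_j$, the pigeonhole principle makes each $\Sigma_j$ ``Tverberg unavoidable'': among any $q$ pairwise disjoint faces, at least one lies in $\Sigma_j$. One then spends only $n-m-1$ of the available $n-1$ target coordinates on an arbitrary strong general position map $f\colon \Ind(G) \to \R^{n-m-1}$, and uses the remaining $m$ coordinates for the functions $x \mapsto \dist(x,\Sigma_j)$. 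The hypothesis applied to $F = (f,\dist(\cdot,\Sigma_1),\dots,\dist(\cdot,\Sigma_m))$ gives a coincidence $F(x_1)=\dots=F(x_q)$; unavoidability forces one $x_i$ into each $\Sigma_j$, and equality of the distance coordinates then forces \emph{all} $x_i$ into every $\Sigma_j$. This yields $|S_i \cap V_j| \le k_j - 1$ for all $i,j$, hence $\bigcup_i S_i$ misses at least $(q-1)m$ vertices, hence the $S_i$ involve at most $(q-1)(n-m)+1$ vertices; strong general position of $f$ forces at least that many, so all the inequalities are equalities and $|S_i \cap V_j| = k_j - 1$ exactly. This gives the almost fair splitting and balance simultaneously, with no geometric interleaving analysis required. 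Note in particular that the exact divisibility $|V_j| = qk_j - 1$ is what makes $\Sigma_j$ unavoidable at threshold $k_j - 1$; your proposal never uses this hypothesis, which is a further sign the moment-curve route is not the intended one. To salvage your plan you would essentially have to rediscover the distance-function constraint trick, which is precisely what Theorem~\ref{thm:main-eq} later recasts equivariantly.
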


This result is a consequence of the constraint method of Blagojevi\'c, the third author, and Ziegler~\cite{blagojevic2014}. While in certain concrete situations it might be possible to reduce to the case that $|V_j|+1$ is divisible by $q$ for all~$j$, this strict requirement on the sizes of the sets $V_j$ is undesirable in practice. The third author extended the methods of~\cite{blagojevic2014} to study topological generalizations of certain results in geometric Ramsey theory~\cite{frick2017-2}. These methods can be employed to establish a sufficient criterion for the existence of fair splittings for sets $V_j$ of arbitrary size in terms of the nonexistence of equivariant maps from the $q$-fold deleted join of the independence complex to a certain representation sphere of the symmetric group; see Section~\ref{sec:prel} for notations and definitions. We develop the following configuration space -- test map scheme:

\begin{theorem}
\label{thm:main-eq}
	Let $G$ be a graph on vertex set~$V$, and let $V_1 \sqcup V_2 \sqcup \dots \sqcup V_m$ be a partition of~$V$. Let $n \ge m+1$ and $q \ge 2$ be integers such that $|V_j| \ge q-1$ for all~$j$, $|V| \le (q-1)n+1$, and such that there is no $\mathfrak S_q$-equivariant map $\Ind(G)^{*q}_\Delta \longrightarrow S(W_q^{\oplus n})$. Then $G$ admits an almost fair splitting by $q$ independent sets.
\end{theorem}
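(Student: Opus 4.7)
The plan is to follow the configuration space--test map paradigm, adapted to this setting via the constraint method as extended in \cite{frick2017-2}. I will construct an $\mathfrak{S}_q$-equivariant test map $F \colon \Ind(G)^{*q}_\Delta \longrightarrow W_q^{\oplus n}$ whose zeros correspond to almost fair splittings, and then invoke the hypothesis: were $F$ nowhere zero, normalizing $F/\|F\|$ would produce an equivariant map $\Ind(G)^{*q}_\Delta \to S(W_q^{\oplus n})$, contradicting the assumption. Thus $F$ must vanish, and the vanishing point will then be processed combinatorially to yield the desired splitting.

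To define $F$, I interpret a point of $\Ind(G)^{*q}_\Delta$ as a $q$-tuple of weighted independent sets $(S_i, \lambda_i t^i)_{i=1}^q$ with pairwise disjoint supports $S_i$ and $\sum_i \lambda_i = 1$. For each $j = 1, \ldots, m$ set $m^j_i := \sum_{v \in V_j} \lambda_i t^i_v$ and let the $j$-th $W_q$-block of $F$ be the deviation of $(m^j_1, \ldots, m^j_q)$ from its mean; these are the $m$ balance coordinates. The remaining $n - m \ge 1$ blocks are constraint coordinates which, in the spirit of the constraint method, enforce that at the zero locus the combinatorial structure of the tuple is rich enough to round to an honest splitting. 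Since the $\mathfrak{S}_q$-action on $\Ind(G)^{*q}_\Delta$ permutes the $q$ chambers and the standard action on each $W_q$ factor permutes the $q$ coordinates, $F$ is manifestly equivariant.

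Having located a zero $x^* \in F^{-1}(0)$, the balance coordinates give $m^j_i = M_j$ independent of $i$ for every $j$, while the constraint coordinates confine $x^*$ to a combinatorial stratum of $\Ind(G)^{*q}_\Delta$. I would extract pairwise disjoint independent sets $S_1, \ldots, S_q$ by reading off the supports in this stratum; the balance equations together with the size constraints $|V| \le (q-1)n+1$ and $|V_j| \ge q-1$ should then yield $|S_i \cap V_j| \ge \lfloor (|V_j|+1)/q\rfloor - 1$ and $|V_j \setminus \bigcup_i S_i| \le q-1$, which is precisely the definition of an almost fair splitting.

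The main obstacle will be the choice and analysis of the $n - m$ constraint coordinates. When $|V_j|+1$ fails to be divisible by $q$, the balance equations do not by themselves force integrality of the intersections $|S_i \cap V_j|$, so the constraint coordinates must be tuned so that rounding a zero of $F$ yields precisely the prescribed slack of at most $q-1$ omitted vertices per $V_j$, with every $\lambda_i$ nonzero. I expect this step to follow the iterated equivariant retraction argument developed in \cite{frick2017-2}, applied here to the deleted join of the independence complex rather than to the classical Tverberg configuration space.
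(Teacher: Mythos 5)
Your proposal correctly identifies the high-level strategy---build an $\mathfrak{S}_q$-equivariant test map $\Ind(G)^{*q}_\Delta \to W_q^{\oplus n}$, use the nonexistence hypothesis to force a zero, and convert that zero into an almost fair splitting---but the concrete test map you sketch does not accomplish the task, and the key technical ingredient is left as a black box.

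The ``balance coordinates'' $m^j_i = \sum_{v\in V_j}\lambda_i t^i_v$, projected to $W_q$, only enforce at a zero that the total barycentric \emph{mass} placed on $V_j$ is the same for all $i$. They give no control whatsoever over the cardinalities $|S_i\cap V_j|$: one could have $S_1$ containing nearly all of $V_j$ and $S_2\cap V_j$ a single vertex, with the weights arranged so that the masses agree. In the paper's proof, the $m$ blocks of $W_q$ do not carry mass functionals at all. They carry the piecewise-linear constraint maps $\Phi_j$ of Lemma~\ref{lem:eq-constraint}, defined on the barycentric subdivision and engineered so that $\Phi_j^{-1}(0)$ is exactly the subcomplex $\Sigma_j$ where every $\sigma_i$ satisfies the cardinality upper bound $|\sigma_i \cap V_j| \le k_j-1$ (with $|\sigma_i \cap V_j| \le k_j-2$ for at least $t_j-1$ of the $\sigma_i$). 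This is the mechanism that converts vanishing of the test map into hard combinatorial bounds on the face sizes, and it is precisely what you defer to ``constraint coordinates'' without constructing.

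You have, in effect, transposed the roles: the $m$ ``balance'' blocks you build are too weak, while the $n-m$ ``constraint'' blocks are undefined. In the paper, the $n-m$ blocks come from a strong general position affine map $f\colon \Ind(G) \to \R^{n-m-1}$ (extended to the deleted join in the standard Tverberg way via the $\lambda_i$'s), and the zero of $F\oplus\Phi$ forces $f(x_1)=\dots=f(x_q)$ and $\lambda_1=\dots=\lambda_q$ together with containment in $\Sigma_1*\dots*\Sigma_m$. The strong general position condition then forces the $S_i$ to involve at least $(q-1)(n-m)+1$ vertices, and a pigeonhole argument paired with the $\Sigma_j$ upper bounds shows that $|\bigcup_i S_i \cap V_j|$ is exactly $q(k_j-1)-t_j+1$, which is what yields the almost fair splitting. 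Both the construction of the constraint maps (Lemma~\ref{lem:eq-constraint}, adapted from \cite[Lemma~2.10]{frick2017-2}) and the counting argument from strong general position are essential and are missing from your sketch; gesturing at ``iterated equivariant retraction'' does not supply them.
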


This is a proper strengthening of Theorem~\ref{thm:main}; see Remark~\ref{rem:comparison}. We generalize this result by proving that one can add a Hamiltonian path to~$G$ and still find an almost fair splitting by $q$ independent sets in this augmented graph; see Theorem~\ref{thm:main+path}. This follows from combining the equivariant-topological approach of Theorem~\ref{thm:main-eq} with ascertaining the vanishing of certain obstructions via convex geometry. We derive the following consequences:

\begin{compactenum}[(1)]
	\item\label{it:sparse} 
	Let $q$ be a prime power. For any partition $V_1 \sqcup V_2 \sqcup \dots \sqcup V_m$ of the vertex set of a graph $G$ into sufficiently large sets $V_j$ with $2N(v) + N^2(v) < q$ for every vertex~$v$, there is an almost fair splitting by $q$ independent sets. Here $N(v)$ denotes the number of neighbors of~$v$, and $N^2(v)$ denotes the number of vertices at distance precisely two from~$v$.
	\item\label{it:optimal} 
	Let $q$ be a prime. Let $G$ be the edge-disjoint union of a path of length $(q-1)n+1$ and vertex-disjoint cliques of size $q-1$.
	Then there is an almost fair splitting by $q$ independent sets. 
	\item\label{it:several-transversals}
	If the vertex set of a graph $G$ is partitioned into $m$ sufficiently large sets, then there are multiple independent sets that each intersect each part and cover more than half of the vertex set of~$G$; see Theorem~\ref{thm:several-transversals} for a precise statement.
\end{compactenum}

We discuss preliminaries in Section~\ref{sec:prel} and present a first idea of our reasoning for a simple example in Section~\ref{sec:cycle}. Section~\ref{sec:main} contains proofs of Theorem~\ref{thm:main} and Theorem~\ref{thm:main-eq}. In Section~\ref{sec:consequences} we deduce Items~(\ref{it:sparse}), (\ref{it:optimal}), and~(\ref{it:several-transversals}) as consequences of the main result; see Theorems~\ref{thm:corollaries} and~\ref{thm:several-transversals}. Section~\ref{sec:path} treats almost fair splittings of a path with additional restrictions on the independent sets; in particular, we prove approximations to a conjecture of Alishahi and Meunier. Section~\ref{sec:kneser} explains a connection between fair splittings by independent sets and chromatic numbers of Kneser hypergraphs. In particular, this leads to a new proof of Alishahi and Meunier's almost fair splitting result. While the focus of our work is on independence complexes of graphs, our methods can establish results for almost fair splittings of any simplicial complex. We remark on these generalizations in Section~\ref{sec:rem}.

\section{Preliminaries}
\label{sec:prel}

Here we collect the relevant definitions and notations. We refer to Matou\v sek's book~\cite{matousek2008} for more details.

\medskip

\noindent
\textbf{Simplicial complexes.} A simplicial complex $K$ is a set of sets such that $\sigma \in K$ and $\tau \subset \sigma$ implies $\tau \in K$. All simplicial complexes considered in this manuscript will be finite. Any set in $K$ is called a face of~$K$. The set of singleton sets (the minimal nonempty faces) is referred to as the vertex set of~$K$. If $V$ is the vertex set of $K$ then $K$ admits a geometric realization in~$\R^V$: it consists of the convex hulls of the form $\conv\{e_i \: : \: i \in \sigma\}$, where $\sigma$ ranges over the faces of~$K$. We will think of simplicial complexes as topological spaces in this way. If $L \subset K$ is a simplicial complex as well, we refer to it as subcomplex of~$K$. For a face $\sigma \in K$ define its dimension $\dim \sigma$ by $|\sigma|-1$, and the dimension of~$K$, denoted $\dim K$, by the largest dimension of a face of~$K$. For an integer $k\ge 0$ the $k$-skeleton $K^{(k)}$ of $K$ is the set of all faces of~$K$ of dimension at most~$k$. For a simplicial complex $K$ we denote the barycentric subdivision by~$K'$; it is the simplicial complex whose vertex set is the set of nonempty faces of~$K$, and whose faces correspond to chains of faces of~$K$. The barycentric subdivision $K'$ of $K$ is homeomorphic to~$K$. The $n$-simplex~$\Delta_n$ is the set of all subsets of $\{1,2,\dots,n+1\}$, or geometrically the convex hull of the standard basis in~$\R^{n+1}$. If the vertices of a simplex are labeled from the set $V$ we will denote the simplex by~$\Delta^V$.

\medskip

\noindent
\textbf{Joins and deleted joins.} For simplicial complexes $K$ and $L$ on disjoint vertex sets we denote their join by $K * L$, that is, the simplicial complex that as a set of sets is defined by containing all faces of the form $\sigma \cup \tau$ with $\sigma \in K$ and $\tau \in L$. If $K$ and $L$ are not defined on disjoint vertex set we first make them disjoint before taking their join. (In particular, $K*K$ has twice as many vertices as~$K$.) The $q$-fold join of $K$ is denoted by~$K^{*q}$, while $K^{*q}_\Delta$ denotes the $q$-fold deleted join, that is, the subcomplex of $K^{*q}$ that only contains faces $\sigma_1 \cup \dots \cup \sigma_q$ that were pairwise disjoint even before we forced the $q$ copies of $K$ to have disjoint vertex sets. Topologically the join $K * L$ is the quotient space of $K \times L \times [0,1]$ with the identifications $(k,\ell,0) \sim (k',\ell,0)$ and $(k, \ell, 1) \sim (k, \ell', 1)$ for $k,k'  \in K$ and $\ell, \ell' \in L$. Points in the join $K_1 * K_2 * \dots * K_q$ we will denote by $\lambda_1x_1 + \lambda_2x_2 + \dots + \lambda_qx_q$ with $x_i \in K_i$, $\lambda_i \ge 0$, and $\sum \lambda_i =1$. If $\lambda_i = 0$ then the point $\lambda_1x_1 + \lambda_2x_2 + \dots + \lambda_qx_q$ is independent of the choice of~$x_i$.

\medskip

\noindent
\textbf{Independence complexes and neighborhoods.} A set $S$ in a graph $G$ is independent (or stable) if no edge has both endpoints in~$S$. The set of all independent sets in $G$ is a simplicial complex, called the independence complex~$\Ind(G)$. For any vertex $v$ we denote the size of its neighborhood, that is, the set of all vertices that share an edge with~$v$, by~$N(v)$. The number of vertices that are two edges removed from~$v$ but not in the neighborhood of~$v$ is denoted by~$N^2(v)$. The maximum over all $N(v)$ is the maximal degree, denoted by~$\Delta(G)$.

\medskip

\noindent
\textbf{Equivariant maps.} Given two topological spaces $X$ and $Y$ with actions by the group~$G$, we say that a continuous map $f\colon X \longrightarrow Y$ is $G$-equivariant (or a $G$-map) if $f(g\cdot x) = g\cdot f(x)$ for all $g\in G$ and all~${x \in X}$. We denote the symmertric group on $q$ letters by~$\mathfrak S_q$. By $W_q$ we denote the vector space $\{(y_1, \dots, y_q) \in \R^q \: : \: \sum y_i = 0\}$ with the action by $\mathfrak S_q$ that permutes coordiantes. The $n$-fold direct sum of $W_q$ will be denoted by~$W_q^{\oplus n}$. The group $\mathfrak S_q$ acts diagonally on~$W_q^{\oplus n}$. The unit sphere in $W_q^{\oplus n}$ (with the induced $\mathfrak S_q$-action) will be denoted by~$S(W_q^{\oplus n})$.

\medskip

\noindent
\textbf{Convex hulls and Tverberg-type results.} Tverberg~\cite{tverberg1966} showed that any $(q-1)(d+1)+1$ points in $\R^d$ can be partitioned into $q$ sets whose convex hulls have a point in common. We will use several times that this result is sharp for every $q$ and~$d$: Any $(q-1)(d+1)$ points in $\R^d$ that are in strong general position do not admit a partition into $q$ sets whose convex hulls all share a common point. Points generically are in strong general position.

\section{Embeddings of simplicial complexes and fair splittings of the cycle}
\label{sec:cycle}

Before delving into details we will present our approach in a simple example: consider the cycle graph $G$ on six vertices, labeled cyclically $1,2, \dots, 6$. Suppose the six vertices are split into two sets $V_1$ and $V_2$ of size three each. An almost fair splitting by two independent sets consists of two sets $S_1$ and $S_2$ that are disjoint and intersect each $V_i$ in one vertex. (In fact, in this special case the notions of fair splitting and almost fair splitting coincide.) Let $K$ be the complete bipartite graph for the bipartition $V_1 \sqcup V_2$. Think of the six vertices of $G$ distributed along the unit circle in the plane in cyclic order, and draw in the edges of $K$ as straight line segments. Since $K$, as the complete bipartite graph on $3+3$ vertices, is non-planar, there are two vertex-disjoint edges $S_1$ and $S_2$ of $K$ that intersect. Since $S_1$ and $S_2$ intersect their endpoints are alternating along the circle, and thus $S_1$ and $S_2$ are independent sets in~$G$. They form an almost fair splitting by construction of the graph~$K$.

We will generalize this reasoning to cycles of arbitrary length with partitions of their vertex sets into an arbitrary number of sets of odd cardinality. We will first give a new proof of a special case of a recent result of Alishahi and Meunier~\cite{alishahi2017} for almost fair splittings of cycles, before generalizing our approach to sparse graphs. Alishahi and Meunier show more than the result below; see Remark~\ref{rem:AM}.

\begin{theorem}
\label{thm:cycle}
	For any partition $V_1 \sqcup V_2 \sqcup \dots \sqcup V_m$ of the vertex set $\{1,2,\dots,n\}$ of the $n$-cycle into sets $V_i$ of odd cardinality there is a balanced almost fair splitting by two independent sets.
\end{theorem}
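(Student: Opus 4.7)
The plan is to derive this theorem from Theorem~\ref{thm:main} applied with $q = 2$. Writing $|V_j| = 2k_j - 1$ and $|V| = n$, the identity $|V| = (q-1)n'+1$ forces the parameter $n' = n-1$, and the side condition $n' \ge m+1$ becomes $n \ge m+2$. Since $n \equiv m \pmod 2$ as a sum of $m$ odd numbers, the only other possibility is $n = m$, in which case every $V_j$ is a singleton; I would handle this edge case directly by taking the two parity classes $S_1 = \{1,3,5,\dots\}$ and $S_2 = \{2,4,6,\dots\}$, leaving vertex $n$ uncovered when $n$ is odd.

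For $n \ge m+2$, the remaining task is to verify the topological hypothesis of Theorem~\ref{thm:main}: for every continuous $F \colon \Ind(C_n) \to \R^{n-2}$ there exist two vertex-disjoint independent sets $S_1, S_2$ of $C_n$ with $F(S_1) \cap F(S_2) \ne \emptyset$. My approach is to use the reference linear map $F_0$ that places the cycle vertices on the moment curve, realizing them as vertices of the cyclic polytope $C_{n-2}(n)$. For $n$ points on the moment curve in $\R^{n-2}$ there is an essentially unique Radon partition, namely the alternating parity classes, and when $n$ is even these classes are precisely two disjoint maximal independent sets of $C_n$, verifying the hypothesis for $F_0$. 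One then lifts from $F_0$ to an arbitrary continuous $F$ by a $\mathfrak S_2$-equivariant obstruction argument: the cyclic polytope's Radon partition yields a non-nullhomotopic equivariant class on $\Ind(C_n)^{*2}_\Delta$ that is stable under continuous deformation of $F$, which is precisely the configuration space--test map formulation of the hypothesis of Theorem~\ref{thm:main}.

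The main obstacle is twofold. First, the equivariant obstruction step is delicate because $\Ind(C_n)^{*2}_\Delta$ has dimension only about $n-1$, comparable to the target sphere $S^{n-2}$, so one must exploit that the reference Radon class is essential (coming from the cyclic polytope's rigid combinatorics) rather than appealing to a pure connectivity bound, which would not suffice given that $\Ind(C_n)$ itself has connectivity only on the order of $n/3$. Second, for $n$ odd the two parity classes cease to be independent in $C_n$---consecutive vertices $n$ and $1$ fall in the same class---so the reference map needs adjustment. I would address this by deleting one vertex $v$ from some $V_j$ (the \emph{almost}-fair condition permits one uncovered vertex per part), reducing to the even case on the resulting path, and re-attaching $v$ as the distinguished uncovered vertex; a short parity check ensures the resulting splitting remains balanced.
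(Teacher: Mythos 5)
Your plan is to deduce Theorem~\ref{thm:cycle} from Theorem~\ref{thm:main} with $q=2$, which requires verifying that \emph{every} continuous map $F\colon \Ind(C_n)\to\R^{n-2}$ has a pair of disjoint faces of $\Ind(C_n)$ with intersecting images. This hypothesis is false, and the gap cannot be repaired by the equivariant-obstruction step you sketch. The complex $\Ind(C_n)$ has dimension $\lfloor n/2\rfloor - 1$, far below $n-2$, and a generic affine map to $\R^{n-2}$ admits a coincidence only among the unique Radon partition of all $n$ vertices; one can place the vertices so that this Radon partition is not a pair of independent sets. Concretely, for $n=6$ place $p_1,\dots,p_5\in\R^4$ affinely independent and set $p_6 = p_1+p_2+p_3-p_4-p_5$; then the unique Radon partition of $\{p_1,\dots,p_6\}$ is $\{1,2,3\}\sqcup\{4,5,6\}$, neither of which is independent in $C_6$, and any $5$ of the points are affinely independent so no smaller coincidence exists. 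Hence this linear $F$ violates the hypothesis of Theorem~\ref{thm:main}, and (equivalently) there \emph{does} exist a $\mathfrak S_2$-map $\Ind(C_6)^{*2}_\Delta\to S(W_2^{\oplus 5})$; the equivariant obstruction class you want to be nonzero is actually zero. The moment-curve map $F_0$ having a Radon point among independent sets is a special feature of that one map, not a topologically stable phenomenon: a single zero of the test map $\Ind(C_n)^{*2}_\Delta \to W_2^{\oplus(n-1)}$ on a complex that is not a closed manifold carries no well-defined degree and does not force all maps to have zeros.

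The paper's proof takes a structurally different route that sidesteps exactly this issue. It applies Sarkaria's nonembeddability theorem (Theorem~\ref{thm:sarkaria}) not to $\Ind(C_n)$ but to a different complex $K$ --- the \emph{fairness complex}, the join of skeleta $\Delta_{2k_1}^{(k_1-1)}*\dots*\Delta_{2k_m}^{(k_m-1)}$ encoding the size constraints $|\sigma\cap V_j|\le k_j-1$. Sarkaria guarantees that \emph{this} $K$ has the required Tverberg property. Then a \emph{specific} map is chosen (linear interpolation of the moment curve in $\R^{2d}$), and the geometric structure of the moment curve together with Gale's evenness criterion forces the two coincident faces of $K$ to alternate cyclically and hence be independent in $C_n$. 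In short, the paper encodes fairness topologically (in $K$) and independence geometrically (via the moment curve and Gale); your proposal tries to encode independence topologically (in $\Ind(C_n)$), and that fails because $\Ind(C_n)$ is much too low-dimensional to obstruct maps to $\R^{n-2}$. Your treatment of the $n=m$ edge case is fine, but it does not rescue the main argument.
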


To prove this result we need to adapt the reasoning above. Replace the complete bipartite graph on $3+3$ vertices with the simplicial complex whose facets contain the appropriate amount of vertices of each~$V_j$. The unit circle will be replaced by the moment curve $\gamma(t) = (t, t^2, \dots, t^{2d})$ in~$\R^{2d}$. The non-planarity of $K_{3,3}$ that concluded our reasoning for the $6$-cycle was generalized by Sarkaria~\cite{sarkaria1991}. 

\begin{theorem}[Sarkaria~\cite{sarkaria1991}]
\label{thm:sarkaria}
	Let $k_1, \dots, k_m$ be nonnegative integers, $d = k_1+ \dots + k_m -1$, and let $K = \Delta_{2k_1}^{(k_1-1)} * \dots * \Delta_{2k_m}^{(k_m-1)}$. Then for any continuous map $f\colon K \longrightarrow \R^{2d}$ there are two disjoint faces $\sigma_1$ and $\sigma_2$ of $K$ such that $f(\sigma_1) \cap f(\sigma_2) \ne \emptyset$.
\end{theorem}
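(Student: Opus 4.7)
The plan is to apply the standard configuration space--test map scheme outlined in Section~\ref{sec:prel} to reduce the coincidence statement to the non-existence of a certain equivariant map. Suppose for contradiction that $f\colon K \longrightarrow \R^{2d}$ satisfies $f(\sigma_1) \cap f(\sigma_2) = \emptyset$ for every pair of disjoint faces $\sigma_1, \sigma_2$ of $K$. Define the $\mathfrak S_2$-equivariant test map
\[ g\colon K^{*2}_\Delta \longrightarrow \R^{2d} \oplus \R, \quad g(\lambda_1 x_1 + \lambda_2 x_2) = \bigl(\lambda_1 f(x_1) - \lambda_2 f(x_2),\; \lambda_1 - \lambda_2\bigr), \]
where the target carries the antipodal action. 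The hypothesis forces $g$ to be nowhere zero, since $g = 0$ would require $\lambda_1 = \lambda_2 = \tfrac12$ and $f(x_1) = f(x_2)$, contradicting the choice of $f$ because $x_1$ and $x_2$ then lie in disjoint faces. Normalizing yields an $\mathfrak S_2$-equivariant map $K^{*2}_\Delta \longrightarrow S^{2d}$, so it suffices to show that no such map exists.

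I would rule this out by bounding the $\mathfrak S_2$-index of $K^{*2}_\Delta$ from below. The key structural input is the identity $(K_1 * \cdots * K_m)^{*2}_\Delta \cong (K_1)^{*2}_\Delta * \cdots * (K_m)^{*2}_\Delta$ of $\mathfrak S_2$-spaces, together with join-additivity of connectivity, $\mathrm{conn}(A*B) \ge \mathrm{conn}(A) + \mathrm{conn}(B) + 2$. It is then enough to prove that each factor $(\Delta_{2k_i}^{(k_i-1)})^{*2}_\Delta$ is $(2k_i - 2)$-connected, because summing gives
\[ \mathrm{conn}\bigl(K^{*2}_\Delta\bigr) \ge \sum_{i=1}^m (2k_i - 2) + 2(m-1) = 2d, \]
and a free $\mathfrak S_2$-CW-complex that is $2d$-connected admits no equivariant map to $S^{2d}$ by the equivariant Borsuk--Ulam principle (or equivalently by equivariant obstruction theory).

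The hard part will be establishing the connectivity claim for the deleted join of a skeleton of a simplex; this is essentially the combinatorial heart of the van Kampen--Flores theorem. A direct approach is to start from the identification $(\Delta_{2k})^{*2}_\Delta \cong S^{2k}$ with the antipodal action and analyze how passing to the subcomplex where each copy uses at most $k$ vertices affects connectivity: only top-dimensional cells (those of total cardinality $2k+1$ with one side of size $k+1$) are removed, so the homotopy groups in dimensions $\le 2k-2$ are preserved, forcing $(2k-2)$-connectivity. Alternatively, one can identify the deleted join with a chessboard-type complex and apply known shellability or discrete Morse results. Once this lemma is in hand, all remaining pieces---the test-map construction, the deleted-join-over-join identity, join additivity of connectivity, and the equivariant Borsuk--Ulam principle---assemble straightforwardly into the theorem.
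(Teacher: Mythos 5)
The paper does not prove this theorem; it cites Sarkaria and points to \cite{blagojevic2014} for a simplified proof via the constraint method, which reduces the statement to the topological Radon theorem by appending distance-to-skeleton coordinates---the very technique the paper then deploys in its proof of Theorem~\ref{thm:main}. Your proposal instead takes the classical route of bounding the $\mathfrak S_2$-index of the deleted join by connectivity. The test-map construction, the isomorphism $(K_1*\cdots*K_m)^{*2}_\Delta \cong (K_1)^{*2}_\Delta * \cdots * (K_m)^{*2}_\Delta$, join-additivity of connectivity (or, more robustly, of $\Z/2$-acyclicity via the K\"unneth formula for joins), and the equivariant Borsuk--Ulam step all assemble correctly, and the arithmetic reducing the problem to $(2k_i-2)$-connectivity of each factor is right.

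The gap is in the justification of this central connectivity lemma. It is not true that in passing from $(\Delta_{2k})^{*2}_\Delta \cong S^{2k}$ to $L := (\Delta_{2k}^{(k-1)})^{*2}_\Delta$ ``only top-dimensional cells are removed'': the minimal faces of the cross-polytope that are dropped have the form $\sigma_1*\emptyset$ (or $\emptyset*\sigma_2$) with $|\sigma_1|=k+1$, which have dimension $k$, and simplices of every dimension from $k$ through $2k$ disappear. The naive cellular argument---cells removed only in dimensions $\ge d$, hence $\pi_i$ preserved for $i\le d-2$---therefore yields only $(k-2)$-connectivity, far short of the required $(2k-2)$. The lemma is nevertheless true, but needs a genuine proof. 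One workable route: let $M$ be the subcomplex of the barycentric subdivision of $S^{2k}$ spanned by barycenters of faces not in $L$, so that $L\simeq S^{2k}\setminus M$; then $M$ splits into two disjoint copies of the order complex of the poset of pairs $(\sigma_1,\sigma_2)$ with $|\sigma_1|\ge k+1$, each of which retracts via $(\sigma_1,\sigma_2)\mapsto(\sigma_1,\emptyset)$ onto a poset with a top element and is hence contractible; Alexander duality then gives $\tilde{H}_i(L;\Z/2)\cong\tilde{H}^{2k-1-i}(M;\Z/2)=0$ for $0\le i\le 2k-2$, which suffices for the cohomological version of the index bound even without separately establishing simple-connectivity. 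The ``chessboard complex'' alternative you gesture at is also not immediate, since $L$ is not literally a chessboard complex. In short, the strategy is sound and standard, but its key lemma is not proved as written.
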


Simplified proofs of this and other nonembeddability results can be found in~\cite{blagojevic2014}. With this result, we are now in a position to prove Theorem~\ref{thm:cycle}.

\begin{proof}[Proof of Theorem~\ref{thm:cycle}]
Construct the simplicial complex $K$ on vertex set~$\{1,2,\dots,n\}$, where the set $\sigma \subset \{1,2,\dots,n\}$ is a face of $K$ if and only if $|\sigma \cap V_j| \le  \lfloor\frac{|V_j|+1}{2}\rfloor-1$ for all~$j$. Thus the dimension of $K$ is $d = (\sum_{j=1}^m \lfloor\frac{|V_j|+1}{2}\rfloor-1)-1$, and $K$ is isomorphic to $\Delta_{2k_1}^{(k_1-1)} * \dots * \Delta_{2k_m}^{(k_m-1)}$, where $k_j = \lfloor\frac{|V_j|+1}{2}\rfloor-1$. Here we used that $V_j$ has odd cardinality for all~$j$. Map the vertices of the $n$-cycle in cyclic order to pairwise distinct points along the moment curve $\gamma(t) = (t, t^2, \dots, t^{2d})$ in~$\R^{2d}$. By interpolating linearly we obtain a continuous map $f\colon K \longrightarrow \R^{2d}$ that maps a face $\sigma$ of $K$ to the convex hull of its vertices on~$\gamma$. By Theorem~\ref{thm:sarkaria} there are two disjoint faces $\sigma_1$ and $\sigma_2$ of $K$ such that $f(\sigma_1) \cap f(\sigma_2) \ne \emptyset$. We identify both $\sigma_1$ and $\sigma_2$ with their respective sets of vertices. That the intersection $f(\sigma_1) \cap f(\sigma_2)$ is nonempty means that the convex hulls of $\sigma_1$ and of $\sigma_2$ intersect. As points on the moment curve are in general position, both convex hulls must have dimension~$d$.

Now if two point sets, each of size $d+1$, on the moment curve in $\R^{2d}$ have intersecting convex hulls then their vertices alternate along the curve by Gale's evenness criterion; see~\cite{gale1963}. In particular, both $\sigma_1$ and $\sigma_2$ are independent sets in the $n$-cycle. Since both $\sigma_1$ and $\sigma_2$ have dimension $d = \dim K$ they contain the correct amount of vertices from each~$V_j$. At most one vertex in each $V_j$ (in fact, exactly one vertex) is not contained in $S_1$ or $S_2$ since $|V_j|$ is odd.
\end{proof}

\begin{remark}
\label{rem:AM}
	Alishahi and Meunier~\cite{alishahi2017} establish the existence of a balanced almost fair splitting by two independent sets $S_1$ and $S_2$ for any partition $V_1 \sqcup V_2 \sqcup \dots \sqcup V_m$ of the vertex set of the $n$-cycle if $m$ and $n$ have the same parity. This is a simple corollary of the main result of~\cite{alishahi2017}, and we give a different proof of this main result; see Theorem~\ref{thm:kneser}.
\end{remark}

Adding only two edges to a path or cycle might make it impossible to find an almost fair splitting by two independent sets. Consider a path $P$ and add two edges to $P$ to form two vertex-disjoint $3$-cycles. Now consider a partition of the vertex set where all six vertices of these $3$-cycles are in one part, say~$V_1$, of the partition. Then each independent set can contain at most one vertex from each $3$-cycle, so two independent sets will leave at least two vertices of $V_1$ uncovered. But an almost fair splitting can leave out at most one vertex from each~$V_j$.

\section{Proof of the main results and some extensions}
\label{sec:main}

We will first prove Theorem~\ref{thm:main}. We are given a graph $G$ whose vertex set $V$ is partitioned into $V_1 \sqcup V_2 \sqcup \dots \sqcup V_m$, and an integer $q \ge 2$ such that $|V| = (q-1)n+1$ for some integer $n \ge m+1$ and $|V_j| = qk_j-1$ for integers $k_1, \dots, k_m \ge 1$. We know that for any continuous map $F\colon \Ind(G) \longrightarrow \R^{n-1}$ there are $q$ pairwise disjoint faces $\sigma_1, \dots, \sigma_q$ of $\Ind(G)$ with $F(\sigma_1) \cap \dots \cap F(\sigma_q) \ne \emptyset$. We would like to show that there are $q$ pairwise disjoint independent sets $S_1, \dots, S_q$ in~$G$ such that $|S_i \cap V_j| = k_j -1$ for all $i$ and all~$j$.

\begin{proof}[Proof of Theorem~\ref{thm:main}]
	For each set $V_j$ define $\Sigma_j \subset \Delta^V$ as the subcomplex of faces $\sigma$ with $|\sigma \cap V_j| \le k_j-1$. Given $q$ pairwise disjoint faces $\sigma_1, \dots, \sigma_q$ of $\Delta^V$ at least one face $\sigma_i$ is contained in~$\Sigma_j$ by the pigeonhole principle. In the language of~\cite{blagojevic2014}, this means that $\Sigma_j$ is Tverberg unavoidable. By the proof technique of~\cite[Theorem~4.3]{blagojevic2014} for any continuous map $f\colon \Ind(G) \longrightarrow \R^{n-m-1}$ there are $q$ pairwise disjoint faces $\sigma_1, \dots, \sigma_q$ of $\Ind(G) \cap \Sigma_1 \cap \dots \cap \Sigma_m$ such that $f(\sigma_1) \cap \dots \cap f(\sigma_q) \ne \emptyset$: Namely, consider the map $F\colon \Ind(G) \longrightarrow \R^{n-1}, x \mapsto (f(x), \dist(x, \Sigma_1), \dots, \dist(x, \Sigma_m))$, where the distance $\dist$ from a point to a set is defined using some metric on $\Ind(G)$ that makes $x \mapsto \dist(x, \Sigma_j)$ a continuous function. There are $x_1, \dots, x_q$ in pairwise disjoint faces of $\Ind(G)$ with $F(x_1) = F(x_2) = \dots = F(x_q)$. Since $\Sigma_j$ is Tverberg unavoidable there is an $x_i$ that is in~$\Sigma_j$, so $\dist(x_i, \Sigma_j) = 0$. But then since $\dist(x_1, \Sigma_j) = \dist(x_2, \Sigma_j) = \dots = \dist(x_q, \Sigma_j)$, all distances vanish, and thus $x_1, \dots, x_q \in \Sigma_j$. This is true for all~$j$, so $x_1, \dots, x_q \in \Sigma_1 \cap \dots \cap \Sigma_m$, and $f(x_1) = f(x_2) = \dots = f(x_q)$ since the same holds for~$F$.
	
	Now let $f\colon \Ind(G) \longrightarrow \R^{n-m-1}$ be a strong general position map, that is, whenever $f(\sigma_1) \cap \dots \cap f(\sigma_q) \ne \emptyset$ for pairwise disjoint faces $\sigma_1, \dots, \sigma_q$ of $\Ind(G)$, then they involve at least $(q-1)(n-m) + 1$ vertices. Let $S_1, \dots, S_q$ be pairwise disjoint faces of $\Ind(G) \cap \Sigma_1 \cap \dots \cap \Sigma_m$ with $f(S_1) \cap \dots \cap f(S_q) \ne \emptyset$. Then the $S_i$ are pairwise disjoint independent sets with $|S_i \cap V_j| \le k_j -1$ for all~$i$ and all~$j$. In particular, $\sum_{i=1}^q |S_i \cap V_j| \le qk_j-q$, and so $\bigcup_i S_i$ intersects $V_j$ in all but at least $q-1$ points. This implies that $\bigcup_i S_i$ does not contain at least $(q-1)m$ points of~$V$, and thus the $S_i$ involve at most $(q-1)(n-m)+1$ vertices. However, since $f$ is a strong general position map the $S_i$ have to involve at least $(q-1)(n-m)+1$ vertices, which implies $|S_i \cap V_j| = k_j -1$ for all~$i$ and all~$j$.
\end{proof}

\begin{remark}
    In the proof above we obtain more than just a balanced almost fair splitting by $q$ independent sets $S_1, \dots, S_q$, namely for a fixed $j\in \{1,2,\dots,m\}$ each $S_i$ contains precisely the same amount of vertices of~$V_j$. 
\end{remark}

We now have two goals. First, we would like to remove the condition that $|V_j| = qk_j -1$, and second, we will present a configuration space -- test map scheme for the problem of almost fair splittings by independent sets; see \v Zivaljevic~\cite{zivaljevic1996, zivaljevic1998} and Matou\v sek~\cite{matousek2008} for an introduction to the configuration space -- test map scheme. To any graph $G$ we will associate a simplicial complex $K$ with $\mathfrak S_q$-action such that if $K$ does not admit an $\mathfrak S_q$-map into a certain representation sphere of~$\mathfrak S_q$, we can conclude that $G$ admits an almost fair splitting by $q$ independent sets. These two goals are achieved by Theorem~\ref{thm:main-eq}.

In the proof of Theorem~\ref{thm:main} we employed methods developed in~\cite{blagojevic2014}. These were extended by the third author in~\cite{frick2017-2}. The lemma below is a slight generalization of~\cite[Lemma~2.10]{frick2017-2}. The same proof idea works, which we reproduce here for the reader's convenience.

\begin{lemma}
\label{lem:eq-constraint}
	Let $q\ge 2$, $t\ge 1$, and $k \ge \min\{t,2\}$ be integers, and let $\Delta$ be the simplex on $qk-t$ vertices. Denote by $\Sigma \subset \Delta^{*q}_\Delta$ the subcomplex of faces $\sigma_1 * \dots * \sigma_q$ of $\Delta^{*q}_\Delta$ with $|\sigma_i| \le k-1$ for all~$i$ and $|\sigma_i| \le k-2$ for at least $t-1$ of the~$\sigma_i$. Then there is an $\mathfrak S_q$-equivariant map $\Phi\colon \Delta^{*q}_\Delta \longrightarrow W_q$ with $\Phi^{-1}(0) = \Sigma$.
\end{lemma}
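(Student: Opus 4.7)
The plan is to construct $\Phi$ explicitly by adapting the constraint method of~\cite{blagojevic2014}. For a point $p = \sum_i \lambda_i x_i \in \Delta^{*q}_\Delta$ with pairwise-disjoint supports $\sigma_i = \operatorname{supp}(x_i)$, I will set
\[ a_i(p) := \lambda_i \cdot e_k(x_i) \qquad \text{and} \qquad b_i(p) := \lambda_i \cdot e_{k-1}(x_i), \]
where $e_\ell$ denotes the $\ell$-th elementary symmetric polynomial in the barycentric coordinates of $x_i$. These are continuous and $\mathfrak S_q$-equivariantly defined, with $a_i(p) = 0 \iff |\sigma_i| \le k-1$ and (using $k \ge 2$ whenever $t \ge 2$) $b_i(p) = 0 \iff |\sigma_i| \le k-2$. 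The map will take the form
\[ \Phi_i(p) = \bigl(a_i(p) + \tau_i(p)\bigr) - \frac{1}{q}\sum_j \bigl(a_j(p) + \tau_j(p)\bigr), \]
which lies in $W_q$ and is $\mathfrak S_q$-equivariant as soon as the correction $\tau_i$ transforms by $\tau_{\pi(i)}(\pi p) = \tau_i(p)$.

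First I will dispatch the base case $t = 1$ by taking $\tau_i \equiv 0$. Indeed, $\Phi(p) = 0$ forces the $a_j$'s to share a common value $c$; if $c > 0$ then every $|\sigma_j| \ge k$, giving $\sum_j |\sigma_j| \ge qk > qk - 1 = |V(\Delta)|$, a contradiction. Hence $c = 0$, i.e., $|\sigma_j| \le k-1$ for each $j$, placing $p$ in the $t=1$ version of $\Sigma$; the converse is immediate.

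For $t \ge 2$, the subcomplex $\Sigma$ excludes those configurations with more than $q - t + 1$ indices at $|\sigma_j| = k-1$, so a nontrivial $\tau_i$ is required. To guarantee $\tau_i|_\Sigma = 0$, I will include a factor $e_{q-t+1}(b_j : j \ne i)$, which vanishes precisely when at most $q-t$ of the $b_j$ ($j \ne i$) are nonzero, equivalently when at least $t-1$ sides other than $i$ satisfy $|\sigma_j| \le k-2$—exactly the second defining condition of $\Sigma$. I anticipate that a purely symmetric choice such as $\tau_i = b_i \cdot e_{q-t+1}(b_j : j \ne i)$ will fail at the dense symmetric configurations outside $\Sigma$ with $|\sigma_j| = k-1$ for every $j$ and all $b_j$ equal, since at such points the $\tau_j$'s become side-independent and cancel in the projection to $W_q$. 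The main obstacle is breaking this residual symmetry equivariantly. My plan is to multiply $\tau_i$ by a side-dependent factor such as $\langle f, x_i\rangle := \sum_v f(v) x_i(v)$, where $f\colon V(\Delta) \to \R$ is a fixed linear functional taking distinct values on distinct vertices of $\Delta$; because the supports $\sigma_i$ are pairwise disjoint, these evaluations differ across sides, so the $\tau_j$'s take distinct values at these configurations and force $\Phi \ne 0$ there.

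Verifying $\Phi^{-1}(0) = \Sigma$ then reduces to a case analysis on the multiset $(|\sigma_j|)$. The inclusion $\Sigma \subseteq \Phi^{-1}(0)$ follows from the choice of factors: on $\Sigma$ all $a_j$ vanish, and for each $i$ with $b_i > 0$ there are by assumption at least $t-1$ indices $j \ne i$ with $b_j = 0$, so the elementary symmetric factor in $\tau_i$ vanishes. For the reverse inclusion, suppose $\Phi(p) = 0$, i.e., all $a_j + \tau_j$ agree to some common $c$. The vertex bound $|V(\Delta)| = qk - t$ rules out the escape ``all $|\sigma_j| \ge k$''; the intermediate case ``some $|\sigma_i| \ge k$ but not all'' produces an index with $a_{i'} = 0$ and yields an algebraic constraint on the $\tau_j$'s that cannot be solved generically; and the all-at-$k-1$ case is ruled out by the symmetry-breaking factor $\langle f, x_i\rangle$. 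I expect that reconciling vanishing on $\Sigma$ with nonvanishing in each of these residual cases within a single formula for $\tau_i$ will be the delicate part of the argument, requiring careful balancing of the three ingredients $a_i$, $b_i \cdot e_{q-t+1}(b_j: j \ne i)$, and $\langle f, x_i\rangle$.
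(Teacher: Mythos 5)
Your approach is genuinely different from the paper's. The paper defines $\Phi$ as an affine (piecewise-linear) map on the barycentric subdivision $(\Delta^{*q}_\Delta)'$: a vertex of the subdivision that corresponds to a face of $\Sigma$ is sent to $0$, and a vertex corresponding to a face $\sigma_1 * \cdots * \sigma_q \notin \Sigma$ is sent to $e_j \in \R^q$, where $\sigma_j$ has the smallest cardinality, ties being resolved by a fixed linear order on the vertices of $\Delta$. The paper then shows $\Phi^{-1}(D) = \Sigma$ via a combinatorial argument on chains of faces, and projects to $W_q = D^\perp$. Your proposal instead attempts an explicit algebraic formula built from elementary symmetric polynomials in the barycentric coordinates. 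The $t=1$ base case you give is correct and is in fact a clean, self-contained argument.

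However, for $t \ge 2$ there is a genuine gap, and it is precisely at the point you flag as ``delicate.'' The claim that the side-dependent factor $\langle f, x_i \rangle$ breaks the residual symmetry fails: disjoint supports do \emph{not} imply that the weighted averages $\langle f, x_1\rangle, \dots, \langle f, x_q\rangle$ are distinct. Concretely, take $q = 2$, $t = 2$, $k = 3$, so $\Delta$ has $4$ vertices and $f$ takes the values $1,2,3,4$ on them. At the interior point of the face $\sigma_1 * \sigma_2$ with $\sigma_1 = \{1,4\}$, $\sigma_2 = \{2,3\}$ and uniform barycentric coordinates, one has $\langle f, x_1 \rangle = \tfrac{5}{2} = \langle f, x_2 \rangle$, while $a_1 = a_2 = 0$ and $b_1, b_2 > 0$. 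Hence $\tau_1 = \tau_2$ and $\Phi$ vanishes, even though this face is \emph{not} in $\Sigma$. Thus $\Phi^{-1}(0) \supsetneq \Sigma$, and this containment is exactly the direction needed in the application (Theorem~\ref{thm:main-eq}). The underlying difficulty is that on the stratum where every $|\sigma_i| = k-1$, which lies outside $\Sigma$ when $t \ge 2$, you must produce a continuous $\mathfrak S_q$-equivariant map into $W_q \setminus \{0\}$, and no single linear functional $f$ separates the sides there. The paper's PL construction sidesteps this because the lexicographic tie-breaker is applied discretely to the minimal faces in a chain and only requires consistency along chains of the barycentric subdivision, not along continuous families of barycentric coordinates; there is no continuous analog of this tie-breaker of the form you propose. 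To repair your route you would need a fundamentally different symmetry-breaking device, and the natural ones (e.g.\ a vector of moments $\langle f^j, x_i\rangle$) force the target out of $W_q$.
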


\begin{proof}
	The map $\Phi$ will be defined as an affine map on the barycentric subdivision~$(\Delta^{*q}_\Delta)'$ of~$\Delta^{*q}_\Delta$, that is, we need to decide the value of $\Phi$ on any vertex of $(\Delta^{*q}_\Delta)'$, or equivalently, on any face of~$\Delta^{*q}_\Delta$. Any vertex subdividing a face of $\Sigma$ will be mapped to zero. To determine the value of $\Phi$ on a vertex $v$ that subdivides a face $\sigma_1 * \dots * \sigma_q$ that is not contained in~$\Sigma$, we first need to linearly order the vertices of~$\Delta$ in an arbitrary fashion. Define $\Phi(v)$ to be the standard basis vector $e_j \in \R^q$, where $j$ is the index with $\dim \sigma_j < \dim \sigma_i$ for all $i \ne j$. If multiple $\sigma_j$ have the lowest dimension among $\sigma_1, \dots, \sigma_q$ we use the linear order on the vertices of~$\Delta$ as a tie-breaker. More precisely, if $\dim \sigma_{j_1} = \dim \sigma_{j_2} = \dots = \dim \sigma_{j_\ell} < \dim \sigma_i$ for some indices $j_1, \dots, j_{\ell}$ and all $i \in \{1,2,\dots, q\} \setminus \{j_1, \dots, j_\ell\}$, then let $\Phi(v)$ be equal to $e_{j_t}$, where among the vertices of $\sigma_{j_1} \cup \dots \cup \sigma_{j_\ell}$ the face $\sigma_{j_t}$ has the vertex that comes first in the linear order of all vertices of~$\Delta$.
	
	Denote by $D = \{(y_1, \dots, y_q) \in \R^q \: : \: y_1 = y_2 = \dots = y_q\}$ the diagonal in~$\R^q$. We claim that $\Phi^{-1}(D) = \Sigma$. Certainly $\Sigma \subset \Phi^{-1}(D)$, since $\Phi$ is constantly equal to zero on~$\Sigma$ and $0 \in D$. We need to show that if some point $x \in \Delta^{*q}_\Delta$ satisfies $\Phi(x) \in D$ then $x \in \Sigma$. If $x \notin \Sigma$ then $x$ is in the relative interior of some face $\sigma$ of $(\Delta^{*q}_\Delta)'$ that has a vertex $v$ not contained in~$\Sigma$. In particular, $\Phi(v) = e_j$ for some index~$j$, and thus the $j$th coordinate of $\Phi(x)$ is strictly positive, implying that $\Phi(x) \ne 0$.
	
	Suppose now that $\Phi(x) \in D \setminus \{0\}$. We claim that this leads to a contradiction. Since all coordinates of $\Phi(x)$ are strictly positive, the minimal face $\sigma$ of $(\Delta^{*q}_\Delta)'$ containing~$x$ has vertices $v_1, \dots, v_q$ with $\Phi(v_i) = e_i$. The vertices $v_i$ subdivide faces $\tau_i = \sigma_1^{(i)} * \dots * \sigma_q^{(i)}$ of~$\Delta^{*q}_\Delta$. Since the vertices $v_1, \dots, v_q$ form a face of the barycentric subdivision of~$\Delta^{*q}_\Delta$ the corresponding faces $\tau_1, \dots, \tau_q$ are totally ordered by inclusion. Suppose $\tau_j$ is inclusion-minimal among $\tau_1, \dots, \tau_q$ and $\tau_{j'}$ is inclusion-maximal. The face $\sigma_i^{(i)}$ has the lowest dimension among $\sigma_1^{(i)}, \dots, \sigma_q^{(i)}$ because $\Phi(v_i) = e_i$. Moreover, none of these faces $\tau_i$ are contained in~$\Sigma$. This leads to a contradiction. If the inclusion-minimal face $\tau_j = \sigma_1^{(j)} * \dots * \sigma_q^{(j)}$ has $|\sigma_j^{(j)}| \ge k$, then for the inclusion-maximal face $\tau_{j'} = \sigma_1^{(j')} * \dots * \sigma_q^{(j')}$ all $\sigma_i^{(j')}$ satisfy $|\sigma_i^{(j')}| \ge k$. Thus since the $\sigma_i^{(j')}$ are pairwise disjoint $\sigma_1^{(j')} * \dots * \sigma_q^{(j')}$ involves at least $qk$ vertices, which is a contradiction to $\Delta$ only having $qk-t$ vertices. If on the other hand the inclusion-minimal face $\sigma_1^{(j)} * \dots * \sigma_q^{(j)}$ is not contained in $\Sigma$ since $|\sigma_i^{(j)}| \ge k-1$ for at least $q-t+1$ of the~$\sigma_i^{(j)}$, then for the inclusion-maximal face $\sigma_1^{(j')} * \dots * \sigma_q^{(j')}$ we have that $|\sigma_i^{(j')}| \ge k-1$ for all~$i$ and $|\sigma_i^{(j')}| \ge k$ for at least $q-t+1$ of the~$\sigma_i^{(j')}$. This again leads to the contradiction that the~$\sigma_i^{(j')}$ involve more than $qk-t$ vertices.
	
	We have constructed an $\mathfrak S_q$-equivariant map $\Phi\colon \Delta^{*q}_\Delta \longrightarrow \R^q$ that maps precisely the points in $\Sigma$ to the diagonal~$D$. The desired map to $W_q$ can now be constructed by orthogonally projecting along~$D$ onto $D^\perp = W_q$.
\end{proof}

Let us recall the statement of Theorem~\ref{thm:main-eq}. We are given a graph $G$ whose vertex set~$V$ is partitioned into $V_1 \sqcup V_2 \sqcup \dots \sqcup V_m$, as well as integers $n \ge m+1$ and $q \ge 2$ with $|V_j| \ge q-1$ for all~$j$ and $|V| \le (q-1)n+1$ such that there is no $\mathfrak S_q$-equivariant map $\Ind(G)^{*q}_\Delta \longrightarrow S(W_q^{\oplus n})$. Our goal is to show that $G$ admits an almost fair splitting by $q$ independent sets.

\begin{proof}[Proof of Theorem~\ref{thm:main-eq}]
    For each set $V_j$ find integers $t_j \in \{1,2,\dots, q\}$ and $k_j \ge \min\{2,t_j\}$ with $|V_j| = qk_j-t_j$. Following Lemma~\ref{lem:eq-constraint}, denote by $\Sigma_j \subset (\Delta^{V_j})^{*q}_\Delta$ the subcomplex of faces $\sigma_1 * \dots * \sigma_q$ of $(\Delta^{V_j})^{*q}_\Delta$ with $|\sigma_i| \le k_j-1$ for all~$i$ and $|\sigma_i| \le k_j-2$ for at least $t_j-1$ of the~$\sigma_i$. In particular, $|\bigcup_i \sigma_i| \le q(k_j-1)-t_j+1$.

    Let $f\colon \Ind(G) \longrightarrow \R^{n-m-1}$ be an affine map that maps the vertices of $\Ind(G)$ to points in strong general position. In particular, $f$ is generic in the sense that if pairwise disjoint faces $\sigma_1, \dots, \sigma_q$ of $\Ind(G)$ satisfy $f(\sigma_1) \cap \dots \cap f(\sigma_q) \ne \emptyset$ then the faces $\sigma_i$ involve at least $(q-1)(n-m)+1$ vertices. Define an $\mathfrak S_q$-equivariant map $F\colon \Ind(G)^{*q}_\Delta \longrightarrow W_q^{\oplus (n-m)}$ in the following way: Compose the $\mathfrak S_q$-equivariant map (here $\mathfrak S_q$ acts on the codomain by permuting coordinates) $$\Ind(G)^{*q}_\Delta \longrightarrow (\R^{n-m})^q, \lambda_1x_1 + \dots + \lambda_qx_q \mapsto (\lambda_1, \lambda_1f(x_1), \dots, \lambda_q, \lambda_qf(x_q))$$ with the projection along the diagonal $D = \{(y_1, \dots, y_q) \in (\R^{n-m})^q \: : \: y_1 = \dots = y_q\}$ onto the orthogonal complement~$D^\perp$, which is equivariantly isomorphic to~$W_q^{\oplus(n-m)}$. Notice that $F(\lambda_1x_1 + \dots + \lambda_qx_q) = 0$ if and only if $(\lambda_1, \lambda_1f(x_1), \dots, \lambda_q, \lambda_qf(x_q)) \in D$ if and only if $\lambda_1 = \lambda_2 = \dots = \lambda_q$ and $f(x_1) = f(x_2) = \dots = f(x_q)$. 

    The independence complex $\Ind(G)$ is a subcomplex of $\Delta^V$ and thus $\Ind(G)^{*q}_\Delta \subset (\Delta^V)^{*q}_\Delta$. For each $V_j \subset V$ we have an $\mathfrak S_q$-map $\Phi_j\colon (\Delta^{V_j})^{*q}_\Delta \longrightarrow W_q$ with $\Phi_j^{-1}(0) = \Sigma_j$. Define the map $\Phi\colon (\Delta^{V_1})^{*q}_\Delta * \dots * (\Delta^{V_m})^{*q}_\Delta \longrightarrow W_q^{\oplus m}$ by $\Phi(\lambda_1x_1 + \dots + \lambda_mx_m) = (\lambda_1\Phi_1(x_1), \dots, \lambda_m\Phi_m(x_m))$. The concrete choice of $x_i$ does not matter for $\lambda_i = 0$, so $\Phi$ is well-defined and continuous. The complex $(\Delta^{V_1})^{*q}_\Delta * \dots * (\Delta^{V_m})^{*q}_\Delta$ is isomorphic to $(\Delta^{V_1} * \dots * \Delta^{V_m})^{*q}_\Delta$, which simply is $(\Delta^V)^{*q}_\Delta$. The map $\Phi$ is zero precisely on the subcomplex $\Sigma_1 * \dots * \Sigma_m$.

    The $\mathfrak S_q$-map $F\oplus \Phi\colon \Ind(G)^{*q}_\Delta \longrightarrow W_q^{\oplus n}, x \mapsto (F(x), \Phi(x))$ must have a zero, since otherwise we could equivariantly retract to the unit sphere to obtain an $\mathfrak S_q$-map $\Ind(G)^{*q}_\Delta \longrightarrow S(W_q^{\oplus n})$. Thus there is a point $\lambda_1x_1 + \dots + \lambda_qx_q \in \Ind(G)^{*q}_\Delta \cap (\Sigma_1 * \dots * \Sigma_m)$ with $F(\lambda_1x_1 + \dots + \lambda_qx_q) = 0$. The latter means that $f(x_1) = \dots = f(x_q)$. Let $S_i$ denote the minimal face of $\Ind(G)$ that $x_i$ is contained in. Suppose that for some $j$ the intersection $\bigcup_i S_i \cap V_j$ had size at most $q(k_j-1)-t_j$, that is, the faces $S_i$ miss at least $q$ vertices of~$V_j$. Then $\bigcup_i S_i$ does not contain at least $(q-1)m+1$ vertices of~$V$. Since the size of $V$ is at most $(q-1)n+1$, the $S_i$ involve at most $(q-1)(n-m)$ vertices. This is a contradiction to $f$ being a general position map. Thus, $\bigcup_i S_i \cap V_j$ has size $q(k_j-1)-t_j+1$ for all~$j$, and so $|S_i \cap V_j| = k_j-2$ for $t_j-1$ of the~$S_i$, and $|S_i \cap V_j| = k_j-1$ for the other~$S_i$. If $t_j=1$ for some $j$ then all $S_i \cap V_j$ have size $k_j-1 = \lfloor\frac{(qk_j-1)+1}{q}\rfloor-1$, wheras if $t_j > 1$ then all $S_i \cap V_j$ have size at least $k_j-2 = \lfloor\frac{(qk_j-t_j)+1}{q}\rfloor-1$, and $\bigcup_i S_i$ covers all but $q-1$ vertices of~$V_j$.
\end{proof}

In the proof above we argued using a strong general position map $f\colon \Ind(G) \longrightarrow \R^{n-m-1}$. By using specific such maps $f$ we can slightly enlarge the graph $G$ and still find an almost fair splitting by $q$ independent sets. That is, we can augment the equivariant topological approach above by further constraining the independent sets using the intersection combinatorics of convex sets in Euclidean space, similar to the third author's work on chromatic numbers of hypergraphs~\cite{frick2017, frick2018}. The following remark is a simple consequence of the proof above.

\begin{remark}
    If whenever the intersection $f(\sigma_1) \cap \dots \cap f(\sigma_q)$ consists of exactly one point for pairwise disjoint faces $\sigma_1, \dots, \sigma_q$ of~$\Ind(G)$, we can guarantee that no face $\sigma_i$ contains both vertex $v$ and $w$ of~$G$, we may add the edge $(v,w)$ to~$G$. The graph $G'$ that is obtained from $G$ by adding all such edges still admits an almost fair splitting by $q$ independent sets provided that there is no $\mathfrak S_q$-equivariant map $\Ind(G)^{*q}_\Delta \longrightarrow S(W_q^{\oplus n})$. This is simply because the almost fair splitting by independent sets consists of $q$ faces $\sigma_1, \dots, \sigma_q$ of $\Ind(G)$ with $f(\sigma_1) \cap \dots \cap f(\sigma_q) \ne \emptyset$, but no face $\sigma_i$ can contain an edge of~$G'$, so the sets $\sigma_i$ are independent in~$G'$ too. 
\end{remark}

For example, by placing points sufficiently far apart along the moment curve we can derive the following result (Theorem~\ref{lem:weakly-stable-points} gives the precise intersection combinatorics for such point sets):

\begin{theorem}
\label{thm:main+path}
	Let $G$ be a graph on vertex set~$V$, and let $V_1 \sqcup V_2 \sqcup \dots \sqcup V_m$ be a partition of~$V$. Let $n \ge m+1$ and $q \ge 2$ be integers such that $|V_j| \ge q-1$ for all~$j$, $|V| \le (q-1)n+1$, and such that there is no $\mathfrak S_q$-equivariant map $\Ind(G)^{*q}_\Delta \longrightarrow S(W_q^{\oplus n})$. Let $H$ be a graph on vertex set $V$ obtained by adding a simple path to~$G$. Then $H$ admits an almost fair splitting by $q$ independent sets.
\end{theorem}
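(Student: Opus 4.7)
The plan is to instantiate the strong general position map $f\colon \Ind(G) \longrightarrow \R^{n-m-1}$ used in the proof of Theorem~\ref{thm:main-eq} as a moment curve embedding tailored to the added path. List the vertices of $V$ as $v_1, v_2, \dots, v_{|V|}$ in the order in which they appear along the added simple path, pick rapidly increasing real parameters $t_1 < t_2 < \dots < t_{|V|}$, and define $f$ by sending $v_i$ to the moment curve point $\gamma(t_i) = (t_i, t_i^2, \dots, t_i^{n-m-1})$, extended affinely on each face of $\Ind(G)$.

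For sufficiently quickly growing parameters $t_i$ this map $f$ is in strong general position in the sense required by the proof of Theorem~\ref{thm:main-eq}: whenever pairwise disjoint faces $\sigma_1, \dots, \sigma_q$ of $\Ind(G)$ satisfy $f(\sigma_1) \cap \dots \cap f(\sigma_q) \ne \emptyset$, they involve at least $(q-1)(n-m)+1$ vertices. With this particular choice of $f$, the proof of Theorem~\ref{thm:main-eq} applies verbatim and produces pairwise disjoint independent sets $S_1, \dots, S_q$ of $G$ whose images under $f$ share a common point and which together form an almost fair splitting of the partition $V_1 \sqcup \dots \sqcup V_m$.

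It remains to check that each $S_i$ is also independent in the added path, that is, no $S_i$ contains two consecutive vertices $v_j, v_{j+1}$. By the remark immediately preceding the statement, this reduces to the following assertion: whenever $q$ pairwise disjoint subsets $\sigma_1, \dots, \sigma_q$ of $\{v_1, \dots, v_{|V|}\}$ satisfy $\conv f(\sigma_1) \cap \dots \cap \conv f(\sigma_q) \ne \emptyset$, no single $\sigma_i$ contains two consecutive indices. Applying this alternation property to the $S_i$ above shows that each $S_i$ is independent in the path, hence independent in $H$, completing the proof.

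The main obstacle is thus Theorem~\ref{lem:weakly-stable-points}, the quantitative weakly stable Tverberg statement for points on the moment curve. Its qualitative incarnation in the case $q=2$ is Gale's evenness criterion, already exploited in Section~\ref{sec:cycle}; the general statement should follow from a careful analysis of Vandermonde-type determinants, together with the observation that once the gaps between the parameters $t_i$ are sufficiently large, the top coordinate $t^{n-m-1}$ dominates the intersection combinatorics and forbids adjacent indices from falling in the same Tverberg block. Given this input, the remainder of the argument is a direct invocation of Theorem~\ref{thm:main-eq} and the preceding remark.
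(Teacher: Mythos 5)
Your proposal is essentially the paper's proof: the paper's argument is exactly the remark preceding Theorem~\ref{thm:main+path} (a specific strong general position map $f$ can be used to rule out pairs of vertices from appearing in the same $S_i$) instantiated with points placed far apart along the moment curve in the path order, followed by an appeal to the Bukh--Loh--Nivasch characterization Theorem~\ref{lem:weakly-stable-points}, which says that the only $q$-fold intersections that can occur are the weakly $q$-stable ones, and those cannot place two consecutive path vertices in the same set.

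Two points worth tightening. First, Theorem~\ref{lem:weakly-stable-points} characterizes intersections only for pairwise disjoint sets involving \emph{exactly} $(q-1)(n-m)+1$ points; your write-up only records the lower bound from strong general position. The exact count does come out of the proof of Theorem~\ref{thm:main-eq}, because the constraint subcomplexes $\Sigma_j$ force $|\bigcup_i S_i \cap V_j| \le q(k_j-1)-t_j+1$ for every $j$ (so $|\bigcup_i S_i| \le (q-1)(n-m)+1$), and strong general position forces the reverse inequality; you should say this explicitly, as otherwise the cited theorem does not apply. Second, your closing paragraph sketches a proof of Theorem~\ref{lem:weakly-stable-points} via Vandermonde determinants and domination by the top coordinate; this is not how Bukh, Loh, and Nivasch argue (they use stair-convexity and the stretched grid), but since the paper treats Theorem~\ref{lem:weakly-stable-points} as a cited black box, no new proof is needed and that paragraph can simply be replaced by the citation.
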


\begin{remark}
\label{rem:comparison}
	Theorem~\ref{thm:main-eq} is a proper strenghtening of Theorem~\ref{thm:main}. If there is a continuous map $F\colon \Ind(G) \longrightarrow \R^{n-1}$ such that for  every collection of $q$ pairwise disjoint faces $S_1, \dots, S_q$ of $\Ind(G)$ the intersection $F(S_1) \cap \dots \cap F(S_q)$ is empty, then the $\mathfrak S_q$-map
	$$\Phi\colon \Ind(G)^{*q}_\Delta \longrightarrow (\R^n)^q, \lambda_1x_1 + \dots + \lambda_qx_q \mapsto (\lambda_1, \lambda_1F(x_1), \dots, \lambda_q, \lambda_qF(x_q))$$
	misses the diagonal $D = \{(y_1, \dots, y_q) \in (\R^n)^q \: : \: y_1 = \dots = y_q\}$ and thus can be orthogonally projected to $D^\perp = W_q^{\oplus n}$, where no point is mapped to the origin, so we can further equivariantly retract to~$S(W_q^{\oplus n})$. 
	
	Theorem~\ref{thm:main} furthermore guarantees that the independent sets $S_1, \dots, S_q$ are balanced. In the case that $|V_j| = qk_j -1$ the proof of Theorem~\ref{thm:main-eq} establishes that all $S_i$ have the same size.
\end{remark}

\section{Consequences of the main results}
\label{sec:consequences}

In this section we will derive some consequences of Theorem~\ref{thm:main+path}, which combines the nonexistence of equivariant maps with understanding intersection patterns of convex hulls to find almost fair splittings by independent sets in graphs that are denser than paths or cycles. One advantage of our approach is that the relevant configuration spaces $\Ind(G)^{*q}_\Delta$ are the same as for Tverberg-type problems. This is a well-studied collection of problems that aim to characterize simplicial complexes that have $q$-fold points of coincidence for any continuous map to Euclidean space of a fixed dimension. For any Tverberg-type theorem established via the topological configuration space machinery we thus get a corresponding result on almost fair splittings by independent sets. We give two examples of this phenomenom and cite two results on the nonexistence of equivariant maps that were originally used to establish Tverberg-type results:

\begin{theorem}[Engstr\"om~\cite{engstrom2011}]
\label{thm:constraint-graph}
	Let $q \ge 2$ be a prime power. Let $G$ be a graph on at least ${(q-1)n+1}$ vertices with $2N(v) + N^2(v) < q$ for every vertex~$v$. Then there is no $\mathfrak S_q$-equivariant map ${\Ind(G)^{*q}_\Delta \longrightarrow S(W_q^{\oplus n})}$.
\end{theorem}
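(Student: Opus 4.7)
The strategy is to reduce the non-existence of the equivariant map to a connectivity statement via Volovikov's lemma, and then establish this connectivity by a careful discrete-Morse-type argument using the sparsity hypothesis. Since $q$ is a prime power, say $q=p^\alpha$, restriction of the $\mathfrak{S}_q$-action to a cyclic subgroup $\Z/p$ (generated by a product of disjoint $p$-cycles) yields a free $\Z/p$-action on both $\Ind(G)^{*q}_\Delta$ and $S(W_q^{\oplus n})$. Volovikov's lemma then rules out any $\Z/p$-equivariant---and thus any $\mathfrak{S}_q$-equivariant---map from a $(n(q-1)-1)$-connected space to $S(W_q^{\oplus n})$, since this sphere has dimension $n(q-1)-1$. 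The problem therefore reduces to showing that $\Ind(G)^{*q}_\Delta$ is $(n(q-1)-1)$-connected.

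When $G$ has no edges, $\Ind(G)=\Delta^V$ and $\Ind(G)^{*q}_\Delta$ equals the iterated join $[q]^{*|V|}$, which is homotopy equivalent to a wedge of $(q-1)^{|V|}$ spheres of dimension $|V|-1\ge (q-1)n$; the required connectivity follows. To extend this to a general $G$ satisfying the sparsity hypothesis, I would proceed by an inductive edge-addition argument. Each edge $\{u,v\}$ added to $G$ corresponds to removing, from $(\Delta^V)^{*q}_\Delta$, all faces that place both $u$ and $v$ in the same copy. The plan is to construct a discrete Morse matching pairing each such ``bad'' face with a collapse partner inside $(\Delta^V)^{*q}_\Delta$, so that the surviving critical cells comprise exactly $\Ind(G)^{*q}_\Delta$ together with cells of dimension $\ge n(q-1)$, leaving the connectivity intact.

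The sparsity bound $2N(v)+N^2(v)<q$ should enter precisely in verifying acyclicity of this matching. For each vertex $v$, the $q$ copies of $v$ in the deleted join provide $q$ candidate slots through which collapses can be routed. The factor $2N(v)$ naturally counts the matching partners contributed by each edge incident to $v$ as seen from both endpoints, while $N^2(v)$ governs the second-order conflicts that arise when a collapse for an edge at a neighbor $u$ of $v$ interacts with another collapse involving a vertex at distance two from $v$. Because these potential obstructions total fewer than $q$, there is always a ``free'' copy of $v$ available to serve as a consistent collapsing direction, preventing the matching graph from containing directed cycles.

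The main obstacle I anticipate is coordinating the edge-by-edge matchings globally. When several edges share endpoints, the natural local matchings compete for the same copies in the deleted join, and one must design a single acyclic matching that respects all edges simultaneously. Turning the arithmetic inequality $2N(v)+N^2(v)<q$ into a Hall-type existence statement for such a global matching---with the strict inequality providing the needed slack at every vertex---is the key technical step, and almost certainly the principal difficulty of the argument.
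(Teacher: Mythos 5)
The paper does not prove this theorem; it cites Engstr\"om's 2011 article and uses the statement as a black box. So I will evaluate your proposal against Engstr\"om's actual argument, with which it shares its broad shape (reduce the equivariant non-existence to a connectivity statement, then establish that connectivity by a discrete-Morse/collapsing argument). Two issues stand out.

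First, the group-theoretic reduction as you state it is incorrect for $q = p^\alpha$ with $\alpha > 1$. A cyclic subgroup $\Z/p \subset \mathfrak S_q$ generated by a product of $p^{\alpha-1}$ disjoint $p$-cycles does act freely on $\Ind(G)^{*q}_\Delta$, but it does \emph{not} act freely on $S(W_q^{\oplus n})$: take $q=4$ and the generator $(12)(34)$; then $(a,a,-a,-a)$ with $a\neq 0$ is a nonzero fixed vector in $W_4$, so $S(W_4^{\oplus n})$ has fixed points. The correct reduction embeds the elementary abelian group $(\Z/p)^\alpha$ into $\mathfrak S_{p^\alpha}$ via its regular representation; this action is fixed-point free (though not free) on $S(W_q^{\oplus n})$, and it is precisely Volovikov's theorem (or \"Ozaydin's) for fixed-point-free $(\Z/p)^\alpha$-actions---not Dold's theorem for free actions---that one must invoke. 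Your invocation of ``Volovikov's lemma'' is applied to the wrong group with an incorrect freeness claim; as written the argument would not apply when $\alpha > 1$.

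Second, the discrete Morse step is the entire technical content of Engstr\"om's theorem and you leave it as a sketch, explicitly flagging the construction of a single globally acyclic matching across all edges as the ``principal difficulty.'' Your intuition about where the inequality $2N(v)+N^2(v)<q$ should enter is reasonable and roughly consistent with Engstr\"om's mechanism (the local bound guarantees a free slot among the $q$ copies to route each collapse), but until that matching is constructed and its acyclicity verified, the connectivity claim---and with it the whole theorem---remains unproved. In short: right skeleton, wrong group-theoretic reduction, and the heart of the proof is missing.
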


\begin{theorem}[Blagojevi\'c, Matschke, and Ziegler~\cite{blagojevic2009}]
\label{thm:optimal}
	Let $q \ge 2$ be a prime, and let $G$ be a graph on ${(q-1)n+1}$ vertices that is the disjoint union of $n$ cliques of size~${q-1}$ and an isolated vertex. Then there is no $\mathfrak S_q$-equivariant map ${\Ind(G)^{*q}_\Delta \longrightarrow S(W_q^{\oplus n})}$.
\end{theorem}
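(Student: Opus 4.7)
The plan is to translate the statement into the standard Tverberg-type configuration-space/test-map framework and settle the resulting equivariant question by an obstruction-theoretic calculation in the spirit of B\'ar\'any--Shlosman--Sz\H ucs. First, decompose the source. Since $G$ is the vertex-disjoint union of $n$ cliques $K_{q-1}$ and one isolated vertex $v_0$, its independence complex factors as $\Ind(G) = [q-1]^{*n} * \{v_0\}$, where $[q-1]$ denotes the discrete complex on $q-1$ vertices. Using the identity $(K*L)^{*q}_\Delta \cong K^{*q}_\Delta * L^{*q}_\Delta$ for complexes on disjoint vertex sets,
$$\Ind(G)^{*q}_\Delta \;\cong\; \Delta_{q,q-1}^{*n} * [q],$$
where $\Delta_{q,q-1} := [q-1]^{*q}_\Delta$ is the chessboard complex of non-attacking rook placements on a $q\times(q-1)$ board (with its natural $\mathfrak S_q$-action by row permutation) and $[q] := \{v_0\}^{*q}_\Delta$ is a free $\mathfrak S_q$-orbit of $q$ points.

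Second, I would restrict the action to the cyclic subgroup $\mathbb Z/q \subset \mathfrak S_q$. Because $q$ is prime, $\mathbb Z/q$ acts freely on $W_q \setminus \{0\}$, hence freely on the sphere $S(W_q^{\oplus n})$ of dimension $n(q-1)-1$, and one checks that it also acts freely on the geometric realization of $\Delta_{q,q-1}$: a nontrivial cyclic row-permutation is a $q$-cycle on $[q]$, so the only faces it could fix are those whose row-set equals $[q]$, whereas a face of $\Delta_{q,q-1}$ has at most $q-1$ distinct rows. The join $\Delta_{q,q-1}^{*n} * [q]$ is therefore a free $\mathbb Z/q$-CW complex of dimension $n(q-1)$, exactly one more than $\dim S(W_q^{\oplus n})$. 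Since the target sphere is $(n(q-1)-2)$-connected, standard equivariant obstruction theory places the primary obstruction to a $\mathbb Z/q$-equivariant map in the top degree, as a class in $H^{n(q-1)}_{\mathbb Z/q}\bigl(\Delta_{q,q-1}^{*n} * [q];\mathbb Z\bigr)$; all lower-dimensional obstructions vanish because $\pi_k\bigl(S(W_q^{\oplus n})\bigr) = 0$ for $k<n(q-1)-1$.

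The heart of the argument is to show that this top-degree obstruction is nonzero; equivalently, that the $\mathbb Z/q$-equivariant Euler class of the vector bundle $E\mathbb Z/q \times_{\mathbb Z/q} W_q^{\oplus n}$ pulled back to $\Delta_{q,q-1}^{*n} * [q]$ is nontrivial. The top cells of the source are indexed by $n$-tuples of maximal rook placements (bijections $[q-1]\hookrightarrow [q]$) together with a vertex of $[q]$, and for a generic affine test map one computes the local degree contribution of each top cell to the obstruction cocycle. Organizing these contributions as in the B\'ar\'any--Shlosman--Sz\H ucs proof of the topological Tverberg theorem for prime $q$, the total signed mod-$q$ count reduces (up to units) to $\pm(q-1)!^n$, which is nonzero in $\mathbb F_q$ because $\gcd((q-1)!,q)=1$ (Wilson's theorem). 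Hence the Euler class is a unit in $\mathbb F_q$, so no $\mathbb Z/q$-equivariant map exists, and a fortiori no $\mathfrak S_q$-equivariant map $\Ind(G)^{*q}_\Delta \longrightarrow S(W_q^{\oplus n})$ exists.

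The main obstacle is this final mod-$q$ Euler-class computation: carefully setting up orientations on the top cells so that the signed count is transparent. It is precisely where primality of $q$ enters in an essential way, in parallel with the failure of the topological Tverberg theorem for non-prime-power parameters, where the analogous mod-$q$ cancellations can be arranged. The cleanest way to organize the calculation is to identify the relevant top cocycle with the Fadell--Husseini $\mathbb Z/q$-index class of $\Delta_{q,q-1}^{*n} * [q]$, reducing the nonvanishing to the now-standard computation of this index for chessboard complexes and their joins.
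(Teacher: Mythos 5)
The paper does not prove this theorem; it is cited verbatim from Blagojevi\'c--Matschke--Ziegler, so the comparison must be against that reference. Your structural setup is correct and does match their approach: $\Ind(G) \cong [q-1]^{*n} * \mathrm{pt}$, hence $\Ind(G)^{*q}_\Delta \cong \Delta_{q,q-1}^{*n} * [q]$ where $\Delta_{q,q-1} = [q-1]^{*q}_\Delta$ is the $q\times(q-1)$ chessboard complex; primality of $q$ makes the restricted $\Z/q$-action free on this $n(q-1)$-dimensional complex and on $S(W_q^{\oplus n})$; and since the target sphere is $(n(q-1)-2)$-connected, the only obstruction to a $\Z/q$-map is the primary one in $H^{n(q-1)}_{\Z/q}\bigl(\Delta_{q,q-1}^{*n} * [q];\Z\bigr)$. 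You correctly rely on connectivity of the target rather than of the domain, which matters: $\Delta_{q,q-1}$ is far from top-connected for $q\ge 4$ (it has torsion in low-dimensional homology already for $q=4$), so the domain is not a wedge of top-dimensional spheres and no Dold-type shortcut is available.

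The genuine gap is the evaluation of this primary obstruction. You assert, without computation, that the signed cocycle count "reduces up to units to $\pm(q-1)!^n$"; this is precisely the technical heart of the Blagojevi\'c--Matschke--Ziegler proof and is not supplied here. The appeal to "organizing contributions as in B\'ar\'any--Shlosman--Sz\H ucs" does not transfer: in that setting the deleted join of a simplex is a sphere and Dold's theorem finishes immediately, whereas here one must actually fix a $\Z/q$-equivariant CW structure on $\Delta_{q,q-1}^{*n}*[q]$, choose compatible orientations on the $\Z/q$-orbits of top cells, compute local degrees for a generic affine test map, and sum. The closing suggestion to "identify the top cocycle with the Fadell--Husseini index and invoke the now-standard computation for chessboard complexes" is circular---computing that index for joins of chessboard complexes is a reformulation of, not a shortcut around, the same nonvanishing statement. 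The specific constant $(q-1)!^n$ should also be verified against a small case such as $q=3$, $n=1$ (domain $\Delta_{3,2}*[3]$, target $S^1$) before being trusted. In summary: your decomposition and obstruction-theoretic framework correctly reproduce the cited reference's setup, but the crucial mod-$q$ computation is left as an assertion, and that computation is the substance of the theorem.
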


With these two results on the nonexistence of $\mathfrak S_q$-maps we can now deduce the corollaries advertised in the introduction.

\begin{theorem}
\label{thm:corollaries}
    Let $G$ be a graph whose vertex set~$V$ is partitioned into $V_1 \sqcup V_2 \sqcup \dots \sqcup V_m$.
    \begin{compactenum}[(a)]
        \item\label{it:sparse+path} Suppose there is a prime power $q \ge 2$ with $|V_j| \ge q-1$ for all~$j$ and $|V| \ge (q-1)(m+2)+1$. Suppose further that after deleting the edges of a simple path from $G$ every vertex $v \in V$ satisfies $2N(v) + N^2(v) < q$. Then $G$ admits an almost fair splitting by $q$ independent sets.
        \item Let $q \ge 2$ be a prime with $|V_j| \ge q-1$ for all~$j$. Let $G$ be the edge-disjoint union of a path and pairwise vertex-disjoint cliques of size~${q-1}$ on $(q-1)n+1$ vertices for some integer $n \ge m+1$. Then $G$ admits an almost fair splitting by $q$ independent sets.
    \end{compactenum}
\end{theorem}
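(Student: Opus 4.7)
The plan is to deduce both statements from Theorem~\ref{thm:main+path} by decomposing the given graph as $G = G' \cup P$, where $P$ is the simple path named in the hypothesis and $G' = G \setminus E(P)$ is the complementary ``sparse'' subgraph. On $G'$ alone I verify the hypotheses of a known map-nonexistence theorem---Engstr\"om's Theorem~\ref{thm:constraint-graph} for part (a) and Blagojevi\'c--Matschke--Ziegler's Theorem~\ref{thm:optimal} for part (b)---and then invoke Theorem~\ref{thm:main+path} to add the path back, yielding the claimed almost fair splitting.

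Part (b) requires no adjustment. By hypothesis $G'$ is precisely the vertex-disjoint union of $n$ cliques of size $q-1$ together with an isolated vertex, on a total of $(q-1)n+1$ vertices, so Theorem~\ref{thm:optimal} supplies the nonexistence of any $\mathfrak S_q$-equivariant map $\Ind(G')^{*q}_\Delta \to S(W_q^{\oplus n})$. The remaining hypotheses of Theorem~\ref{thm:main+path}---namely $n \geq m+1$, $|V_j| \geq q-1$, and $|V| \leq (q-1)n+1$---are all given directly, so $G = G' \cup P$ admits an almost fair splitting by $q$ independent sets.

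For part (a), $G' = G \setminus P$ satisfies $2N(v)+N^2(v) < q$ at every vertex by hypothesis. To apply both Engstr\"om's theorem (which requires $|V| \geq (q-1)n+1$) and Theorem~\ref{thm:main+path} (which requires $|V| \leq (q-1)n+1$) simultaneously, we need the exact equality $|V| = (q-1)n+1$ for the parameter $n$. Let $n$ be the largest integer with $(q-1)n+1 \leq |V|$; the assumption $|V| \geq (q-1)(m+2)+1$ forces $n \geq m+2$. If the equality already holds, the two cited theorems combine immediately to produce the splitting. Otherwise we pad: adjoin to $G'$ a set $W$ of isolated vertices with $|W| = (q-1)(n+2)+1 - |V| \in [q,\,2q-3]$, designate $W$ as a new partition class $V_{m+1}^+$ (legal since $|W| \geq q-1$), and extend $P$ to a simple path $P^+$ on $V^+ = V \sqcup W$ by threading through $W$. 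The padded graph $G'^+$ still satisfies $2N(v)+N^2(v) < q$ at every vertex (isolated vertices do not alter neighborhoods), so Engstr\"om's theorem applies with the new parameter $n^+ := n+2 \geq (m+1)+1$, and Theorem~\ref{thm:main+path} produces an almost fair splitting $(S_1^+,\ldots,S_q^+)$ of $V^+$. The restriction $S_i := S_i^+ \cap V$ is then the sought-after almost fair splitting of $V$ by independent sets of $G$: disjointness and independence are inherited, and since the original classes $V_1,\ldots,V_m$ were not touched by the padding, the almost fair inequalities for them transfer verbatim from $V^+$ to $V$.

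The only nontrivial bookkeeping in the proof is the padding argument in part~(a)---juggling the divisibility condition $(q-1)\mid (|V^+|-1)$, the lower bound $|W|\geq q-1$ needed to make $W$ a legitimate partition class, and $n^+\geq m+2$ for the $m+1$ classes now present. The explicit choice $n^+ = n+2$ arranges all three at once, so the conceptual content of the proof lies entirely in the two cited nonexistence theorems and in Theorem~\ref{thm:main+path}.
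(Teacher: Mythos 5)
Your proof is correct and takes essentially the same route as the paper: delete the path to reduce to the sparse graph $G'$, invoke Engstr\"om's or Blagojevi\'c--Matschke--Ziegler's nonexistence theorem for the equivariant map, and then apply Theorem~\ref{thm:main+path} to reinstate the path. The only difference is cosmetic: you make the padding explicit (choosing $n^+ = n+2$, giving $|W| \in [q, 2q-3]$, and threading the path through $W$), whereas the paper simply states that between $q$ and $2q-2$ vertices are added and leaves the bookkeeping implicit.
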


\begin{proof}
	\begin{compactenum}[(a)]
		\item If the number of vertices $|V|$ is of the form $(q-1)n+1$ for some integer~$n$, then this is an immediate consequence of combining Theorem~\ref{thm:main+path} with Theorem~\ref{thm:constraint-graph}. Otherwise add between $q$ and $2q-2$ vertices to $V$ as a new part $V_{m+1}$ such that $|V|$ is of this form. Now Theorem~\ref{thm:main+path} guarantees the existence of an almost fair splitting by $q$ independent sets $S_1, \dots, S_q$. After deleting any vertices of $V_{m+1}$ from these sets~$S_i$, they are still independent in~$G$ and an almost fair splitting.
		\item This follows immediately by combining Theorem~\ref{thm:main+path} and Theorem~\ref{thm:optimal}.\qedhere
	\end{compactenum}
\end{proof}

A precursor to fair representation results is a result of Haxell~\cite{haxell1995, haxell2001} about the existence of an independent set that intersects each part~$V_j$, extending earlier results of Alon~\cite{alon1988, alon1994}.

\begin{theorem}[Haxell~\cite{haxell2001}]
	Let $V_1 \sqcup V_2 \sqcup \dots \sqcup V_m$ be a partition of the vertex set of a graph~$G$ such that $|V_j| \ge 2\Delta(G)$ for all~$j$. Then there exists an independent set $S$ that intersects all~$V_j$.
\end{theorem}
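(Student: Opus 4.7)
The plan is to deduce Haxell's theorem from two standard topological ingredients that fit naturally into the configuration space framework employed throughout the paper. The first is Meshulam's connectivity bound for independence complexes: for any graph $H$, the complex $\operatorname{Ind}(H)$ is $\bigl(\lceil |V(H)|/(2\Delta(H))\rceil - 2\bigr)$-connected. The second is the topological Hall theorem of Aharoni, Berger, and Ziv, a colourful-nerve-style consequence of Meshulam's work: if $V = V_1 \sqcup \cdots \sqcup V_m$ is a partition of the vertex set of a simplicial complex $K$ such that for every nonempty $J \subseteq \{1,\dots,m\}$ the induced subcomplex $K[\bigcup_{j \in J} V_j]$ is $(|J|-2)$-connected, then $K$ contains a face meeting every part~$V_j$.

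I would then apply the topological Hall theorem with $K = \operatorname{Ind}(G)$ and the given partition. Fix a nonempty $J \subseteq \{1,\dots,m\}$ and set $U_J = \bigcup_{j \in J} V_j$. The induced subcomplex $K[U_J] = \operatorname{Ind}(G[U_J])$ is the independence complex of a subgraph of~$G$, whose maximum degree is therefore at most $\Delta(G)$, while the hypothesis $|V_j| \ge 2\Delta(G)$ gives $|U_J| \ge 2\Delta(G)\,|J|$. Meshulam's bound then guarantees that $K[U_J]$ is $\bigl(\lceil 2\Delta(G)|J|/(2\Delta(G))\rceil - 2\bigr) = (|J|-2)$-connected, which is exactly what the topological Hall theorem requires. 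The resulting face of $\operatorname{Ind}(G)$ meeting every~$V_j$ is an independent set in~$G$ intersecting every part, which is the desired transversal~$S$.

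The main obstacle, and indeed the technical heart of the argument, is Meshulam's connectivity estimate itself. I would prove it by induction on $|V(H)|$: pick a vertex $v$ and compare $\operatorname{Ind}(H)$ with $\operatorname{Ind}(H - v)$ and the cone over $\operatorname{Ind}(H - N[v])$ via a homotopy-pushout / Mayer--Vietoris argument, noting that deleting $N[v]$ removes at most $\Delta(H)+1$ vertices so the inductive hypothesis on the two smaller complexes bootstraps the required bound. Once this lemma is in hand, the factor~$2$ in the hypothesis $|V_j| \ge 2\Delta(G)$ is exactly what is needed to feed into the topological Hall theorem, and everything else is formal.
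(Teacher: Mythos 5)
The paper states Haxell's theorem only as a cited black box (attributed to~\cite{haxell2001}) and supplies no proof of its own, so there is no internal argument to compare against. Your reconstruction is a correct and by now standard \emph{topological} route to the result: Meshulam's connectivity bound $\eta(\operatorname{Ind}(H)) \ge |V(H)|/(2\Delta(H))$ (equivalently, $\operatorname{Ind}(H)$ is $\bigl(\lceil |V(H)|/(2\Delta(H))\rceil - 2\bigr)$-connected), fed into the topological Hall theorem. The numerology checks out: since $G[U_J]$ has maximum degree at most $\Delta(G)$ and $|U_J| \ge 2\Delta(G)|J|$, the induced subcomplex is at least $(|J|-2)$-connected, exactly the hypothesis required to extract a colorful face.

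Two remarks. First, this is not the route Haxell herself took; her proof in~\cite{haxell1995, haxell2001} is entirely combinatorial, via an alternating-path/augmenting argument with no topology. The topological reproof you sketch is essentially that of Aharoni, Berger, and Ziv (the topological Hall theorem itself is usually credited to Aharoni and Haxell, 2000, with the connectivity estimate due to Meshulam), and its virtue is precisely that it drops out of the same configuration-space framework used elsewhere in the paper. Second, your sketch of the induction establishing Meshulam's bound glosses over a real technical point: in the Mayer--Vietoris step one compares $\operatorname{Ind}(H)$ with $\operatorname{Ind}(H - v)$ and the suspension of $\operatorname{Ind}(H - N[v])$, and while deleting $N[v]$ (at most $\Delta+1$ vertices) loses at most one in the $\eta$-bound, deleting the single vertex $v$ can also drop $\lceil (n-1)/(2\Delta)\rceil$ below $\lceil n/(2\Delta)\rceil$, so the naive induction does not close. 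Meshulam's actual argument works with a domination-type parameter (and a careful choice of $v$) rather than directly with $n/(2\Delta)$, and this is where the real work lies. As a high-level plan, though, your proposal is sound.
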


We can use Theorem~\ref{thm:corollaries} to prove the existence of multiple pairwise disjoint independent sets that each intersect each part~$V_j$ in a more restrictive setting.

\begin{theorem}
\label{thm:several-transversals}
	Let $V_1 \sqcup V_2 \sqcup \dots \sqcup V_m$ be a partition of the vertex set of a graph~$G$. Let $q$ be a prime power with $q > 2N(v) + N^2(v)$ for all vertices $v$ of~$G$. If $|V_j| \ge 2q-1$ for all~$j$, then there are $q$ pairwise disjoint independent sets $S_1, \dots, S_q$ such that each $S_i$ intersects each~$V_j$.
\end{theorem}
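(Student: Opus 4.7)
The plan is to apply Theorem~\ref{thm:corollaries}(\ref{it:sparse+path}) almost verbatim, with the additional simple observation that under the bound $|V_j| \ge 2q-1$ the lower bound $\lfloor(|V_j|+1)/q\rfloor - 1$ from an almost fair splitting automatically becomes at least $1$, which is precisely the transversal condition that every $S_i$ meets every $V_j$.

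First I would dispose of the trivial case $m=1$: since $|V_1| \ge 2q-1 \ge q$, any $q$ distinct vertices of $V_1$ furnish $q$ pairwise disjoint independent sets (singletons) each meeting $V_1$. So assume $m \ge 2$. Next I would verify the hypotheses of Theorem~\ref{thm:corollaries}(\ref{it:sparse+path}) applied to $G$ itself, with the additional simple path taken to be empty (or a single vertex path), so that the sparsity condition $2N(v) + N^2(v) < q$ we are given for $G$ is exactly what Theorem~\ref{thm:corollaries}(\ref{it:sparse+path}) requires for $G$ minus the path. The condition $|V_j| \ge q-1$ is automatic from $|V_j| \ge 2q-1$, and the count
\[
|V| \;\ge\; m(2q-1) \;\ge\; (q-1)(m+2)+1,
\]
which reduces to $mq \ge 2q-1$ and so holds for $m \ge 2$, supplies the global size requirement of Theorem~\ref{thm:corollaries}(\ref{it:sparse+path}).

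Applying Theorem~\ref{thm:corollaries}(\ref{it:sparse+path}) then produces $q$ pairwise disjoint independent sets $S_1, \dots, S_q$ in $G$ forming an almost fair splitting of the partition $V_1 \sqcup \dots \sqcup V_m$. By definition, this yields
\[
|S_i \cap V_j| \;\ge\; \left\lfloor \frac{|V_j|+1}{q} \right\rfloor - 1
\]
for all $i$ and $j$. Since $|V_j| \ge 2q-1$ implies $|V_j|+1 \ge 2q$, the right-hand side is at least $2-1 = 1$, so every $S_i$ intersects every $V_j$, which is exactly the claim.

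The main (and only) potential obstacle is bookkeeping: confirming that the size threshold $|V_j|\ge 2q-1$ is what makes the almost fair splitting a transversal, and that the counting hypothesis $|V| \ge (q-1)(m+2)+1$ of Theorem~\ref{thm:corollaries}(\ref{it:sparse+path}) is actually implied by $|V_j| \ge 2q-1$ together with $m\ge 2$. Once this is observed, no new topological or combinatorial input beyond Theorem~\ref{thm:corollaries} is needed.
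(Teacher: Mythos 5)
Your proposal is correct and takes exactly the same route as the paper's proof: observe that $|V_j| \ge 2q-1$ forces $\lfloor(|V_j|+1)/q\rfloor - 1 \ge 1$, so an almost fair splitting from Theorem~\ref{thm:corollaries}(\ref{it:sparse+path}) is automatically a system of transversals. You are in fact a bit more careful than the paper, which silently omits both the verification that $|V| \ge (q-1)(m+2)+1$ follows from $|V_j|\ge 2q-1$ and the separate treatment of $m=1$, where that inequality can genuinely fail.
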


\begin{proof}
	The inequality $|V_j| \ge 2q-1$ implies that $\lfloor\frac{|V_{m+1}|+1}{q}\rfloor-1 \ge 1$. In particular, in an almost fair splitting by independent sets $S_1, \dots, S_q$, each $S_i$ intersects each~$V_j$. Now the result is an immediate consequence of Theorem~\ref{thm:corollaries}(\ref{it:sparse+path}).
\end{proof}

\section{Stable almost fair splittings of a path}
\label{sec:path}

A set of vertices $S$ in a graph is \emph{$q$-stable} if any two distinct vertices in $S$ are at distance at least~$q$ in the graph. A conjecture of Alishahi and Meunier about special almost fair splittings of a path is stated in~\cite{alishahi2017} as follows:

\begin{conjecture}[Alishahi and Meunier]
\label{conj:AM}
    Given a positive integer $q$ and a path $P$ whose vertex set is partitioned into $m$ subsets $V_1, \dots, V_m$ of sizes at least $q - 1$, there always exist pairwise disjoint $q$-stable sets $S_1,\dots , S_q$ covering all vertices but $q - 1$ in each $V_j$, with sizes differing by at most one, and satisfying $$|S_i \cap V_j| \ge \Big\lfloor \frac{|V_j| + 1}{q} \Big\rfloor - 1$$ for all $i \in \{1,2,\dots,q\}$ and all $j \in \{1,2,\dots,m\}$.
\end{conjecture}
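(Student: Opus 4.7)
The plan is to translate the conjecture into a splitting problem for independent sets in a suitable graph, and then invoke Theorem~\ref{thm:main+path}. Let $G_q$ be the graph on the vertex set of $P$ whose edges are the pairs $\{i,j\}$ with $0 < |i-j| < q$; then a set is $q$-stable in $P$ exactly when it is independent in $G_q$, and the conjecture is equivalent to $G_q$ admitting a balanced almost fair splitting by $q$ independent sets. Write $G_q$ as the edge-disjoint union of the circulant graph $G$ (with edges $\{i,j\}$, $2 \le |i-j| \le q-1$) and the Hamiltonian path $P$. Theorem~\ref{thm:main+path} then yields the desired splitting provided that $|V|\le(q-1)n+1$ for some $n\ge m+1$ (arrange this, if needed, by adjoining an auxiliary part $V_{m+1}$ of at most $q-1$ dummy vertices, and delete them from the resulting $S_i$ at the end), and that no $\mathfrak S_q$-equivariant map $\Ind(G)^{*q}_\Delta \longrightarrow S(W_q^{\oplus n})$ exists.

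The crux is therefore the equivariant nonembeddability. I would first reduce to the case of prime $q$ by a divisibility argument, inflating each $V_j$ to have size of the form $pk_j-1$ for a prime factor $p$ of $q$, so that the prime-order methods underlying Theorem~\ref{thm:optimal}~\cite{blagojevic2009} become available. For prime $q$, I would then try to locate a sufficiently connected subcomplex of $\Ind(G)^{*q}_\Delta$: in each window of $2(q-1)$ consecutive vertices of $V$, the odd-offset vertices form a clique of $G$ of size roughly $q-1$, giving a covering of $V$ by overlapping cliques of the correct size. A refinement of the vanishing-of-equivariant-obstruction argument used in the proof of Theorem~\ref{thm:main-eq}, applied to this covering together with the connectivity estimates of Engstr\"om~\cite{engstrom2011}, should then yield the nonexistence. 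Finally, to promote ``almost fair'' to ``balanced almost fair'' I would appeal to the strengthening recorded in the remark after Theorem~\ref{thm:main-eq}: when $|V_j|=qk_j-1$ the proof in fact forces every $S_i$ to have the same intersection with each~$V_j$, which handles the Conjecture's requirement that the $|S_i|$ differ by at most one.

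The main obstacle is this nonembeddability for the circulant graph~$G$. Neither Theorem~\ref{thm:constraint-graph} nor Theorem~\ref{thm:optimal} applies off the shelf: for interior vertices $v$ one has $N(v)=2(q-2)$ and $N^2(v)$ also of order $q$, so Engstr\"om's inequality $2N(v)+N^2(v)<q$ is violated, and $G$ has no vertex-disjoint decomposition into cliques of size $q-1$ because overlapping windows force the cliques to share vertices. Extending either non-embeddability theorem to circulant graphs of this density appears to need genuinely new topology, and this is presumably the reason the full conjecture remains open beyond the power-of-two case of Alishahi and Meunier. I expect that the partial progress toward the conjecture in Section~\ref{sec:path} corresponds to relaxing exactly this obstacle, e.g.\ by weakening $q$-stability to a condition for which the circulant graph $G$ decomposes cleanly enough to fall within the scope of Theorem~\ref{thm:main+path}.
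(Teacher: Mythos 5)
You correctly identify the genuine gap yourself: no nonembeddability theorem is available for the dense circulant graph $G$ (window $q-1$), so the plan to invoke Theorem~\ref{thm:main+path} cannot close. That honesty is appropriate, because the statement is a \emph{conjecture} that the paper does not prove in general. But it is worth flagging that your sketch also diverges from what the paper does where it \emph{does} prove something. For $q$ a power of two the paper's argument (Theorem~\ref{thm:kneser}) uses no circulant-plus-path decomposition and no $\mathfrak S_q$-equivariant nonembeddability at all: it reduces the conjecture to the chromatic number of the stable Kneser hypergraph $\KG^q(n,k)_{\widetilde{q-\mathrm{stab}}}$, handles $q=2$ via Schrijver's theorem (with a hands-on vertex-removal argument for the balancing), and composes multiplicatively as in~\cite[Prop.~1]{alishahi2017}. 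The framework you describe is exactly what the paper runs in Section~\ref{sec:path}, but only for strictly weaker stability notions (window $\lfloor q/6 \rfloor$ in Theorem~\ref{thm:weakly-stable}, window $2$ via Theorem~\ref{thm:constraint-path}), precisely because those windows make the residual graph sparse enough for the known equivariant nonembeddability theorems; your observation that an interior vertex of the full circulant has $N(v) = 2(q-2)$, already violating Engstr\"om's hypothesis, pinpoints why the full window is out of reach.

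Two further steps in your sketch would fail even if the nonembeddability were granted. First, ``reduce to prime $q$ by inflating $|V_j|$ to $pk_j-1$ for a prime factor $p$'' is not a valid reduction: enlarging the parts does not change the stability parameter from $q$ to $p$, and the actual reduction in the literature is the multiplicative composition of Alishahi--Meunier, which requires the conclusion for each factor separately (not just for primes), so non-prime-power $q$ is not sidestepped. Second, the balancedness upgrade via Remark~\ref{rem:comparison} is only available when $|V_j| = qk_j-1$ for all $j$, whereas Conjecture~\ref{conj:AM} allows arbitrary $|V_j|\ge q-1$; for general sizes even the $q=2$ balancing needs the explicit repair carried out in the proof of Theorem~\ref{thm:kneser}, and the paper's remark after that theorem notes that the $qk_j-1$/$qk_j$ restriction is exactly what one must impose to extract the balanced statement from the Kneser-type argument for general~$q$.
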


Let $G$ be the graph on vertex set $\{1,2,\dots,n\}$ where two distinct vertices $v$ and $w$ are joined by an edge if and only if $|v-w| < q$. Then the independent sets of $G$ are exactly the $q$-stable sets of~$P$. In our language Conjecture~\ref{conj:AM} can then be stated as: For any partition $V_1 \sqcup V_2 \sqcup \dots \sqcup V_m$ of $\{1,2,\dots,n\}$ there is a balanced almost fair splitting by $q$ independent sets in~$G$.

As mentioned in Remark~\ref{rem:AM}, Alishahi and Meunier establish Conjecture~\ref{conj:AM} for~${q=2}$. Moreover, they show that if the conjecture holds for $q'$ and $q''$ then it also holds for their product $q = q'q''$. Thus Conjecture~\ref{conj:AM} holds for all powers of two. We will give an alternative proof for $q = 2^t$ in the next section. First we will show approximations to Conjecture~\ref{conj:AM} for other values of~$q$.

While we are unable to decide Conjecture~\ref{conj:AM} for other values of~$q$, we can show a weaker version of $q$-stability that is better adapted to our geometric method. We call a partition $S_1 \sqcup S_2 \sqcup \dots \sqcup S_q$ of $\{1,2,\dots,(q-1)n+1\}$ \emph{weakly $q$-stable} if $S_i \cap \{(q-1)(k-1)+1, (q-1)(k-1)+2, \dots, (q-1)k+1\}$ consists of a single point for all~$i$ and all~${k \in \{1,2,\dots, n\}}$. That is, there are specified blocks of length~$q$ that contain exactly one point from each~$S_i$. Consecutive blocks overlap in one point. If $S_1, \dots, S_q \subset \{1,2,\dots,N\}$ are pairwise disjoint and involve $(q-1)n+1$ points then we generalize the definition of weakly $q$-stable in the following way: Let $\varphi\colon \bigcup_i S_i \longrightarrow \{1,2,\dots,(q-1)n+1\}$ denote the order-preserving bijection; then $S_1, \dots, S_q$ are \emph{weakly $q$-stable} if $\varphi(S_1) \sqcup \varphi(S_2) \sqcup \dots \sqcup \varphi(S_q)$ is a weakly $q$-stable partition of~$\{1,2,\dots,(q-1)n+1\}$. The main geometric ingredient now is:

\begin{theorem}[Bukh, Loh, and Nivasch~\cite{bukh2017}]
\label{lem:weakly-stable-points}
    There are arbitrarily long sequences $x_1, x_2, \dots, x_N$ of points in $\R^{n-1}$ in strong general position such that $q$ pairwise disjoint sets $S_1, \dots, S_q \subset \{1,2, \dots, N\}$ involving exactly $(q-1)n+1$ points satisfy $\bigcap_i \conv \{x_j \: : \: j \in S_i\} \ne \emptyset$ if and only if $S_1, \dots, S_q$ are weakly $q$-stable.
\end{theorem}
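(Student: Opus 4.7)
The plan is to realize the $x_j$ as a sequence whose convex-hull combinatorics is governed by an ambient ``staircase'' structure in which Tverberg partitions into $q$ sets of total size $(q-1)n+1$ are precisely the weakly $q$-stable ones. A clean implementation is to place $x_j=\gamma(\alpha^{j})$ on the moment curve $\gamma(t)=(t,t^2,\dots,t^{n-1})\subset\R^{n-1}$ with $\alpha$ large (depending on~$N$,~$n$, and~$q$), followed by a generic perturbation to ensure strong general position. The dominance of the highest-order terms in the resulting Vandermonde matrices forces the intersection pattern of convex hulls of index subsets to stabilize, as $\alpha\to\infty$, to a limit that depends only on the relative orderings of indices; once this limit is attained, the combinatorial characterization can be checked in the limit. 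Since the construction is uniform in~$N$, this yields arbitrarily long such sequences.

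For the ``if'' direction, suppose $S_1,\dots,S_q$ are weakly $q$-stable and, after passing through the order-preserving bijection~$\varphi$, exhaust $\{1,\dots,(q-1)n+1\}$. Then each of the $n$ overlapping blocks $B_k=\{(q-1)(k-1)+1,\dots,(q-1)k+1\}$ meets every $\varphi(S_i)$ in exactly one index. I would construct a Tverberg point iteratively in the coordinates of~$\gamma$: the first coordinate pins down a target value that each $\conv\{x_j : j\in S_i\}$ can attain because the block structure supplies one representative per $S_i$ in each block, and successive coordinates refine the matching, echoing the classical Tverberg partition of $n$ consecutive groups of points on the moment curve.

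The ``only if'' direction is the main obstacle. If $S_1,\dots,S_q$ involve exactly $(q-1)n+1$ points but are not weakly $q$-stable, then after relabeling via~$\varphi$ some block $B_k$ fails to be bijected by the sets, yielding indices $i_0$ and $j_0$ with $\varphi(S_{i_0})\cap B_k=\emptyset$ and $|\varphi(S_{j_0})\cap B_k|\ge 2$. To certify that $\conv(S_{i_0})\cap\bigcap_{i\ne i_0}\conv(S_i)=\emptyset$, I would induct on~$n$: the top coordinate, being exponentially dominant under the $\alpha$-stretching, peels off one block at a time and reduces the statement to the analogous one in $\R^{n-2}$ with $n$ replaced by $n-1$. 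At each step a Gale-evenness style hyperplane, whose coefficients are read off lexicographically from the staircase, witnesses the separation whenever a block fails to be bijected. Combined with strong general position, which forces any intersecting $q$-tuple of pairwise disjoint convex hulls to involve at least $(q-1)n+1$ points, this completes the equivalence.
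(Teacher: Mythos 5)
The paper does not prove this statement: it is imported verbatim as a black box from Bukh, Loh, and Nivasch~\cite{bukh2017} and used in the proof of Theorem~\ref{thm:weakly-stable}. So there is no ``paper's own proof'' to compare your sketch against; I can only assess your sketch on its own terms and against the cited source.

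Your choice of configuration is the right one. The exponentially stretched moment curve $x_j=\gamma(\alpha^j)$ is, for large~$\alpha$, combinatorially equivalent to the ``stretched diagonal'' that Bukh, Loh, and Nivasch actually use, and the high-level strategy of letting the exponential separation between coordinates force the intersection pattern to a combinatorial limit is exactly the philosophy of their stair-convexity framework. One small point: the generic perturbation you append is both unnecessary and potentially harmful. Distinct points on the moment curve are already in strong general position, and the theorem asserts an \emph{exact} if-and-only-if characterization; a perturbation, however small, has no reason a priori to preserve that characterization, so you would either have to argue it does or simply drop it.

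The genuine gap is in the ``only if'' direction, which is the heart of the matter. You observe (correctly) that if the partition fails to be weakly $q$-stable then some block $B_k$ is missed by one part and hit at least twice by another, and you then assert that ``a Gale-evenness style hyperplane, whose coefficients are read off lexicographically from the staircase, witnesses the separation.'' For a $q$-fold intersection there is no single separating hyperplane to exhibit, and Gale's evenness criterion is a statement about faces of a cyclic polytope (i.e., the $q=2$ Radon picture), not about simultaneous intersections of $q$ convex hulls. What Bukh, Loh, and Nivasch actually do is develop a combinatorial model of convexity for the stretched configuration (stair-convexity), prove that ordinary convex hulls of stretched points limit to stair-convex hulls, and then check purely combinatorially that $q$ stair-convex hulls of disjoint sets totaling $(q-1)n+1$ points meet exactly when the partition is block-respecting. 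Your sketch replaces that entire machinery with a one-sentence appeal to a hyperplane whose existence is not argued. Similarly, the inductive ``peel off one block, drop to $\R^{n-2}$'' step needs to be made precise: after projecting away the top coordinate you no longer have the moment curve in one lower dimension, and it is not clear that the remaining $(q-1)(n-1)+1$ relevant indices inherit the block structure of the original problem. Making either of these steps rigorous amounts to reconstructing the core of the BLN argument, which your sketch currently elides.
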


We can use this to show the following weaker version of Conjecture~\ref{conj:AM}.

\begin{theorem}
\label{thm:weakly-stable}
    Let $q \ge 2$ be a prime power, and let $P$ be a path whose vertex set is partitioned into $V_1 \sqcup V_2 \sqcup \dots \sqcup V_m$ with $|V_j| \ge q-1$ for all~$j$. Suppose the length of $P$ is at least $(q-1)(m+2)+1$. Then there are $q$ pairwise disjoint $(\lfloor \frac{q}{6} \rfloor +1)$-stable independent sets $S_1, \dots, S_q$ that are an almost fair splitting of $V_1 \sqcup V_2 \sqcup \dots \sqcup V_m$. Moreover, $S_1, \dots, S_q$ can be chosen to be weakly $q$-stable.
\end{theorem}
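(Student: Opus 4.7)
The plan is to apply Theorem~\ref{thm:main+path} to a sparse auxiliary graph $G$ on $V(P)$ with $P$ itself as the added Hamiltonian path, using the Bukh--Loh--Nivasch point configuration of Theorem~\ref{lem:weakly-stable-points} as the test map to secure the weakly $q$-stable conclusion.

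Set $d = \lfloor q/6 \rfloor$ and define $G$ on $V(P)$ by declaring $v \sim w$ exactly when $2 \le |v-w| \le d$ in the path order. Every vertex of $G$ satisfies $N_G(v) \le 2(d-1)$ and $N^2_G(v) \le 2d+2$, so $2N_G(v) + N^2_G(v) \le 6d - 2 < q$ (since $6d \le q$ by definition of $d$). Engstr\"om's Theorem~\ref{thm:constraint-graph} then gives the absence of an $\mathfrak{S}_q$-equivariant map $\Ind(G)^{*q}_\Delta \to S(W_q^{\oplus n})$ whenever $|V(G)| \ge (q-1)n + 1$, and the independent sets of $H = G \cup P$ are precisely the $(d+1)$-stable sets of $P$. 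Using the length hypothesis, I would append to one end of $P$ a tail of $k$ vertices with $q-1 \le k \le 2q-3$, placed in a new partition piece $V_{m+1}$ of size $\ge q-1$, chosen so that the extended vertex set $V'$ satisfies $|V'| = (q-1)n+1$ for some $n \ge m+2$; the sparsity bound on $G$ is preserved by this extension.

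Next I would run the proofs of Theorems~\ref{thm:main-eq} and~\ref{thm:main+path}, taking the strong general position map $f\colon \Ind(G) \to \mathbb{R}^{n-m-2}$ to be the affine extension of the map sending the $i$th vertex of the extended path (in path order) to the $i$th point of a sequence provided by Theorem~\ref{lem:weakly-stable-points}. The pigeonhole argument in the proof of Theorem~\ref{thm:main-eq}, combined with the cardinality upper bound coming from membership in $\Sigma_1 * \cdots * \Sigma_{m+1}$, forces $|\bigcup_i S_i| = (q-1)(n-m-1) + 1$ exactly; at that point Theorem~\ref{lem:weakly-stable-points} certifies that $S_1, \dots, S_q$ are weakly $q$-stable in the (extended) path order. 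Independence in $H$ together with the added Hamiltonian path provides the $(\lfloor q/6 \rfloor + 1)$-stability in $P$, and Theorem~\ref{thm:main+path} supplies the almost fair splitting of the extended partition.

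The main obstacle is the final restriction to $V(P)$. Deleting the members of $V_{m+1}$ from each $S_i$ preserves both the almost fair splitting of $V_1 \sqcup \cdots \sqcup V_m$ and the $(\lfloor q/6 \rfloor + 1)$-stability for free. Preservation of weak $q$-stability, however, requires that $|\bigcup_i (S_i \cap V_{m+1})|$ be a multiple of $q-1$, so that the truncation excises only whole Bukh--Loh--Nivasch blocks from the tail; equivalently, the relabeled position of the last surviving vertex must land at a block boundary. Achieving this hinges on a careful choice of $k$ within $[q-1, 2q-3]$, coupled with an analysis of how the Bukh--Loh--Nivasch block structure interacts with the $\Sigma_{m+1}$ cardinality constraints. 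The length hypothesis $|V(P)| \ge (q-1)(m+2)+1$ is precisely what guarantees enough flexibility for this adjustment.
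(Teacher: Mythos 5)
Your overall plan coincides with the paper's: build a sparse auxiliary graph on $V(P)$, invoke Engstr\"om's Theorem~\ref{thm:constraint-graph} to kill the equivariant map, and re-run the proof of Theorem~\ref{thm:main-eq} with the Bukh--Loh--Nivasch configuration of Theorem~\ref{lem:weakly-stable-points} as the test map to extract weak $q$-stability. The paper takes $G$ to include the path edges and checks $2N(v)+N^2(v)\le 6\lfloor q/6\rfloor < q$, using that a prime power is never divisible by $6$; your variant, excluding the path edges from $G$, gives $2N_G(v)+N^2_G(v)\le 6\lfloor q/6\rfloor-2<q$ without needing that observation. These are cosmetic differences.

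Where you are more careful than the paper is the truncation step, and you have correctly identified a genuine difficulty there -- but the fix you propose cannot work as stated. You rightly observe that after appending the tail $V_{m+1}$, the resulting $\bigcup_i S_i$ is weakly $q$-stable only in the \emph{extended} order, and that deleting the suffix $\bigcup_i(S_i\cap V_{m+1})$ preserves weak $q$-stability only if its size is a multiple of $q-1$. However, this size is forced, not tunable: the proof of Theorem~\ref{thm:main-eq} yields $|\bigcup_i S_i\cap V_{m+1}|=|V_{m+1}|-(q-1)$ exactly, which is congruent to $|V_{m+1}|$ modulo $q-1$; and the requirement $|V(P)|+|V_{m+1}|\equiv 1\pmod{q-1}$ pins the residue of $|V_{m+1}|$ to $1-|V(P)| \pmod{q-1}$. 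There is thus no freedom in the ``careful choice of $k\in[q-1,2q-3]$'' -- the truncation preserves weak $q$-stability precisely when $|V(P)|\equiv 1\pmod{q-1}$, and the length hypothesis controls only the magnitude of $|V(P)|$, not its residue class. You should also know that the paper's own proof has the identical unaddressed reduction: it writes ``we can assume $N=(q-1)n+1$'' without justifying why the removal of the added part keeps the conclusion. So your diagnosis of the obstacle is sharper than the paper's exposition, but neither your patch nor the paper's argument closes it when $|V(P)|\not\equiv 1\pmod{q-1}$; the ``moreover'' clause remains unproved in that case.
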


\begin{proof}
    Label the vertices of $P$ as $1,2,\dots,N$ such that $P$ traverses them in order. Let $G$ be the graph on vertex set $\{1,2,\dots,N\}$ that connects two distinct vertices $v$ and $w$ by an edge if and only if $|v-w| \le \lfloor \frac{q}{6} \rfloor$. The independent sets of $G$ are exactly the $(\lfloor \frac{q}{6} \rfloor +1)$-stable independent sets of~$P$. A vertex $v$ has degree at most $N(v) \le 2\lfloor \frac{q}{6} \rfloor$ and similarly $N^2(v) \le 2\lfloor \frac{q}{6} \rfloor$. Since $q$ is a prime power and thus not divisible by~six, we have that $2N(v) +N^2(v) < q$ for all vertices~$v$. Now Theorem~\ref{thm:corollaries}(\ref{it:sparse+path}) guarantees the existence of pairwise disjoint $(\lfloor \frac{q}{6} \rfloor +1)$-stable independent sets $S_1, \dots, S_q$ that are an almost fair splitting. 
    
    To see that the $S_i$ can moreover be chosen to be weakly $q$-stable we have to tweak the proof of Theorem~\ref{thm:main-eq}. As in the proof of Theorem~\ref{thm:corollaries}(\ref{it:sparse+path}) we can assume that $N = (q-1)n+1$ for some integer $n \ge m+2$. Let $x_1, \dots, x_N$ be a sequence of points in~$\R^{n-m-1}$ as in Theorem~\ref{lem:weakly-stable-points}. Define the strong general position map $f\colon \Ind(G) \longrightarrow \R^{n-m-1}$ by sending vertex $v \in \{1,2,\dots, N\}$ of $G$ to point~$x_v$. Then extend this map linearly onto the faces of~$\Ind(G)$. The proof of Theorem~\ref{thm:main-eq} now shows the existence of $q$ faces $S_1, \dots, S_q$ of $\Ind(G)$ that are an almost fair splitting and such that $f(S_1) \cap f(S_2) \cap \dots \cap f(S_q) \ne \emptyset$. But this intersection being nonempty precisely means that $S_1, \dots, S_q$ are weakly $q$-stable.
\end{proof}

For $q \le 11$ being $(\lfloor \frac{q}{6} \rfloor +1)$-stable is no improvement over being independent in the path. For prime powers $4 \le q \le 11$ we can use the following result of Hell to establish $3$-stability.

\begin{theorem}[Hell~\cite{hell2008}]
\label{thm:constraint-path}
    Let $q \ge 4$ be a prime power, $n \ge 1$ an integer, and let $G$ be a path of length $(q-1)n+1$. Then there is no $\mathfrak S_q$-equivariant map $I(G)^{*q}_\Delta \longrightarrow S(W_q^{\oplus n})$.
\end{theorem}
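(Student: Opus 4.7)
The plan is to rule out the equivariant map by comparing equivariant indices. Write $q = p^s$ and embed $(\mathbb{Z}_p)^s \hookrightarrow \mathfrak{S}_q$ via the regular representation. Since a fixed-point-free real representation of a $p$-group has Fadell--Husseini index equal to the dimension of its unit sphere, $\mathrm{ind}_{(\mathbb{Z}_p)^s} S(W_q^{\oplus n}) = (q-1)n-1$. It therefore suffices to show that $\mathrm{ind}_{(\mathbb{Z}_p)^s} \Ind(P_N)^{*q}_\Delta \ge (q-1)n$, where $N=(q-1)n+1$, since any $\mathfrak{S}_q$-equivariant map to $S(W_q^{\oplus n})$ restricts to a $(\mathbb{Z}_p)^s$-equivariant one.

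I would carry this out in two combinatorial-topological steps. First, establish that $\Ind(P_N)^{*q}_\Delta$ is shellable (or at least Cohen--Macaulay) of pure dimension $(q-1)n$: a top-dimensional face corresponds exactly to an ordered partition of all $N$ vertices of $P_N$ into $q$ independent sets, that is, to a proper $q$-coloring of the path, and such colorings admit a lex-style shelling. This yields that the reduced $\mathbb{F}_p$-homology of $\Ind(P_N)^{*q}_\Delta$ is concentrated in the top dimension $(q-1)n$. Second, analyze the top $\mathbb{F}_p$-homology as a $(\mathbb{Z}_p)^s$-module: the generators are proper ordered $q$-colorings of $P_N$, on which $(\mathbb{Z}_p)^s$ acts freely via permutation of the $q$ color labels (the regular representation). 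A direct character/counting computation modulo $p$ then shows that the $(\mathbb{Z}_p)^s$-invariants in $\tilde H_{(q-1)n}(\Ind(P_N)^{*q}_\Delta; \mathbb{F}_p)$ vanish. By the standard Borel-construction spectral sequence, this non-triviality of top mod-$p$ equivariant cohomology forces the index to reach $(q-1)n$, and Volovikov's lemma (for $s \ge 2$; a direct $\mathbb{Z}_p$-cohomology argument for $s=1$) then prevents any equivariant map into the smaller-index sphere.

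The hardest step will be the homological/representation-theoretic one. One must verify both that shellability yields homology concentrated in the top dimension \emph{and} that the $(\mathbb{Z}_p)^s$-module structure has no trivial summand mod $p$ --- this is precisely where the prime-power hypothesis enters, since without $p$-group structure the necessary divisibility of orbit sizes on proper $q$-colorings of $P_N$ is unavailable. A secondary technical issue is that for some residues $N \bmod 3$ the factor $\Ind(P_N)$ is itself contractible (by Kozlov's classification), so the analysis cannot proceed by a naive join-connectivity argument and must use the combinatorial structure of the \emph{deleted} join directly; the abundance of top-dimensional faces (proper colorings) ensures the deleted join retains nontrivial top homology even in those cases.
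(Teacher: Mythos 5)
The paper does not give its own proof of this statement; it simply cites Hell~\cite{hell2008}, so there is no argument in the paper to compare against. Evaluated on its own terms, your proposal chooses the standard framework (restrict the $\mathfrak{S}_q$-action to a subgroup $(\Z_p)^s$ embedded via its regular representation, and compare ideal-valued indices via the Borel construction), but the key step is asserted rather than established, and there is evidence it cannot be established in the generality you claim.

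The serious gap is the claim that $\Ind(P_N)^{*q}_\Delta$ is shellable (or Cohen--Macaulay) for $N=(q-1)n+1$, with a ``lex-style shelling'' on proper $q$-colorings. You do not indicate where the hypothesis $q\ge 4$ would enter such a shelling, and in fact it must enter: for $q=3$, $n=1$ the complex $\Ind(P_3)^{*3}_\Delta$ is pure of dimension $2$ with $f$-vector $(9,21,12)$, so $\tilde\chi=-1<0$, which is impossible for a Cohen--Macaulay complex of dimension~$2$ (the only possibly nonzero reduced homology would be $\tilde H_2$, forcing $\tilde\chi\ge 0$). So the complex is not Cohen--Macaulay for $q=3$, and a shelling argument that makes no visible use of $q\ge 4$ cannot be correct. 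This is exactly the hard combinatorial content of Hell's theorem, and it is the part your sketch hand-waves.

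Two further remarks. First, your second step (showing the top $\mathbb{F}_p$-homology has no trivial $(\Z_p)^s$-summand) is logically superfluous for the index comparison: once $\tilde H^i(X;\mathbb{F}_p)=0$ for $i<(q-1)n$ and the $(\Z_p)^s$-action on $X=\Ind(P_N)^{*q}_\Delta$ is free, the Borel spectral sequence already forces $\mathrm{Ind}_{(\Z_p)^s}(X)\cap H^{\le (q-1)n}(B(\Z_p)^s;\mathbb{F}_p)=0$ (indeed the nonvanishing of the top homology is automatic for a finite-dimensional free $(\Z_p)^s$-complex, since $B(\Z_p)^s$ has nonzero cohomology in all degrees). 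What you actually need on the target side is that the Euler class $e(W_q^{\oplus n})$ is a nonzero element of degree $(q-1)n$ in $H^*(B(\Z_p)^s;\mathbb{F}_p)$, which is standard for fixed-point-free representations. Second, your phrase ``Fadell--Husseini index equal to the dimension of its unit sphere'' conflates the ideal-valued index with a numerical invariant and is also off by one: the relevant Euler class lives in degree $\dim W_q^{\oplus n}=(q-1)n$, not $(q-1)n-1$. These are correctable, but the shellability/homology-vanishing claim is the crux, and until it is proved (with explicit dependence on $q\ge 4$) the argument is incomplete.
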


It is now simple to derive the following consequence.

\begin{theorem}
    Let $q \ge 4$ be a prime power, and let $P$ be a path whose vertex set is partitioned into $V_1 \sqcup V_2 \sqcup \dots \sqcup V_m$ with $|V_j| \ge q-1$ for all~$j$. Suppose the length of $P$ is at least $(q-1)(m+2)+1$. Then there are $q$ pairwise disjoint $3$-stable independent sets $S_1, \dots, S_q$ that are an almost fair splitting of $V_1 \sqcup V_2 \sqcup \dots \sqcup V_m$. Moreover, $S_1, \dots, S_q$ can be chosen to be weakly $q$-stable.
\end{theorem}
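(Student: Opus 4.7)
The plan is to mirror the proof of Theorem~\ref{thm:weakly-stable} almost verbatim, substituting Hell's Theorem~\ref{thm:constraint-path} for Engstr\"om's Theorem~\ref{thm:constraint-graph} as the source of the nonexistence of an equivariant map. First I would label the vertices of $P$ in path order as $1, 2, \dots, N$. If $N$ is not already of the form $(q-1)n + 1$, append between $q$ and $2q - 2$ new vertices to the end of $P$ and place them into a new part $V_{m+1}$; the hypothesis that the length of $P$ is at least $(q-1)(m+2)+1$ guarantees that the extended path $P'$ has $(q-1)n + 1$ vertices for some integer $n \ge m+2$ with $|V_{m+1}| \ge q - 1$. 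Any almost fair, $3$-stable, weakly $q$-stable splitting of $P'$ for the enlarged partition restricts, by discarding the appended vertices, to one of $P$ for the original partition.

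Now I apply Theorem~\ref{thm:main+path} with $G = P'$. Hell's Theorem~\ref{thm:constraint-path} supplies precisely the hypothesis that there is no $\mathfrak S_q$-equivariant map $\Ind(P')^{*q}_\Delta \longrightarrow S(W_q^{\oplus n})$. The simple path $Q$ that I adjoin to $P'$ is the zig-zag traversing the sequence $1, 3, 5, \dots, N'_{\mathrm{odd}}, N'_{\mathrm{even}}, N'_{\mathrm{even}} - 2, \dots, 4, 2$, where $N'_{\mathrm{odd}}$ and $N'_{\mathrm{even}}$ denote the largest odd and even indices in $V(P')$. The edges of $Q$ are exactly the distance-$2$ edges of $P'$ together with the single distance-$1$ edge $\{N'_{\mathrm{odd}}, N'_{\mathrm{even}}\}$ (which is already in $P'$), so $H := P' \cup Q$ is precisely the graph on $V(P')$ whose edges are pairs of vertices at distance at most $2$ in $P'$. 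Its independent sets are therefore exactly the $3$-stable subsets of $V(P')$, and Theorem~\ref{thm:main+path} produces $q$ pairwise disjoint independent sets $S_1, \dots, S_q$ of $H$ that form an almost fair splitting.

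To obtain weak $q$-stability at the same time, I proceed as in the final step of the proof of Theorem~\ref{thm:weakly-stable}: inside the proof of Theorem~\ref{thm:main-eq} underlying Theorem~\ref{thm:main+path}, I select the strong general position map $f\colon \Ind(P') \to \R^{n - m - 1}$ to be the affine extension of $v \mapsto x_v$, where $x_1, \dots, x_{|V(P')|}$ is a Bukh--Loh--Nivasch sequence from Theorem~\ref{lem:weakly-stable-points} in $\R^{n-m-1}$. The condition $f(S_1) \cap \dots \cap f(S_q) \ne \emptyset$ delivered by the proof then forces $S_1, \dots, S_q$ to be weakly $q$-stable via the characterization in Theorem~\ref{lem:weakly-stable-points}. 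The only genuinely new ingredient beyond the template of Theorem~\ref{thm:weakly-stable} is the construction of $Q$; the main (mild) obstacle is the observation that although the distance-$2$ edges of $P'$ split into two vertex-disjoint paths over the odd and even indices, they can be collected into a single simple path by turning around with one already-present distance-$1$ edge, so no spurious edges are introduced into $H$.
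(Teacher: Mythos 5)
Your outline is structurally close to the paper's proof, but there is a genuine gap in the last paragraph, and it is exactly the point where the paper flips the roles of the two graphs. In the paper's argument the base graph $G$ is the \emph{distance-two} graph with edges $(i,i+2)$ on the vertex set $\{1,\dots,(q-1)n+1\}$. This $G$ is a subgraph of a (zig-zag) path, so $\Ind(G)^{*q}_\Delta$ contains the deleted join of the independence complex of that longer path, and Hell's Theorem~\ref{thm:constraint-path} still rules out an $\mathfrak S_q$-map. Then the Bukh--Loh--Nivasch map is taken in the \emph{original} path order $v\mapsto x_v$, and the ``added simple path'' is $P$ itself. Weak $q$-stability in the original order excludes pairs at distance one, $G$-independence excludes pairs at distance two, and the two together give $3$-stability, while the weak $q$-stability clause is simultaneously established in the required (original) ordering.

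Your version runs the construction the other way: the base graph is $G=P'$ and the path you want to adjoin is the zig-zag $Q$ consisting almost entirely of distance-two edges. The mechanism behind Theorem~\ref{thm:main+path} only excludes edges that are \emph{consecutive in the ordering used for the Bukh--Loh--Nivasch points}. You explicitly choose $f(v)=x_v$ in the original order so as to read off weak $q$-stability in that order; but weak $q$-stability in the original order rules out only pairs $\{a,a+1\}$ and does \emph{not} rule out pairs $\{a,a+2\}$ (e.g.\ for $q=4$, blocks $\{1,2,3,4\},\{4,5,6,7\},\dots$, a weakly $4$-stable part may contain both $3$ and $5$). Hence nothing in your argument forces the output faces to avoid the distance-two edges of $Q$, and the claimed $3$-stability fails. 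If you instead order the Bukh--Loh--Nivasch points along the zig-zag $Q$, you would recover $3$-stability but then the weak $q$-stability you obtain is with respect to the zig-zag order, not the path order that the definition (and the theorem's conclusion) demands. Your plan therefore cannot deliver both conclusions at once; the cure is precisely the paper's role swap, taking the distance-two graph as the base (where Hell's theorem still applies because it is a subgraph of a path) and $P$ as the adjoined simple path, so that the single Bukh--Loh--Nivasch ordering simultaneously yields weak $q$-stability in the path order and excludes the distance-one edges.
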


\begin{proof}
	Let $G$ be the graph on vertex set $\{1,2,\dots,(q-1)n+1\}$ for some integer $n\ge m+2$ with edges $(i,i+2)$ for $i \in \{1,2,\dots, {(q-1)n-1}\}$. Then since $G$ is a subgraph of a path there is no $\mathfrak S_q$-map $\Ind(G)^{*q}_\Delta \longrightarrow S(W_q^{\oplus n})$ by Theorem~\ref{thm:constraint-path}. We now proceed as in the proof of Theorem~\ref{thm:weakly-stable}. We only need to observe that if $S_1, \dots, S_q$ are weakly $q$-stable for any $q \ge 2$ then no $S_i$ contains two consecutive vertices. 
\end{proof}

For parameters $q$ that are not prime powers, we can use an induction on the number of distinct prime divisors. This is very similar to~\cite[Prop.~1]{alishahi2017}. We will make use of the equality $$\Big\lfloor \frac{1}{c} \Big\lfloor \frac{a}{b} \Big\rfloor \Big\rfloor = \Big\lfloor\frac{a}{bc} \Big\rfloor$$ for all $a,b,c \in \Z$ that is proven there.

\begin{theorem}
	Let $m, q_1, q_2, s_1$, and $s_2$ be positive integers. Let $q = q_1q_2$ and $s = s_1s_2$. Suppose for both $i = 1$ and $i = 2$ and any path and any partition $V_1 \sqcup \dots \sqcup V_m$ of its vertex set with $|V_j| \ge q_i-1$ for all~$j$ there is an almost fair representation by pairwise disjoint $s_i$-stable sets $A^{(i)}_1, \dots, A^{(i)}_{q_i}$. Then for any path and any partition $V_1 \sqcup \dots \sqcup V_m$ of its vertex set with $|V_j| \ge q-1$ for all~$j$ there is an almost fair representation by pairwise disjoint $s$-stable sets $S_1, \dots, S_q$. If moreover, the sets $A^{(1)}_1, \dots, A^{(1)}_{q_1}$ can be chosen to be weakly $w_1$-stable for some $w_1 \ge 2$, then there is an almost fair representation by pairwise disjoint $[(s_2-1)(w_1-1)+1]$-stable sets $S_1, \dots, S_q$.
\end{theorem}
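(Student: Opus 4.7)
The plan is a two-stage product construction: apply the $q_1$-hypothesis to the whole path to obtain an outer splitting, then apply the $q_2$-hypothesis inside each block of that splitting. Write $P$ for the given path with partition $V_1 \sqcup \dots \sqcup V_m$. Since $|V_j| \ge q - 1 \ge q_1 - 1$, the first hypothesis applied directly to $P$ yields pairwise disjoint $s_1$-stable sets $A^{(1)}_1, \dots, A^{(1)}_{q_1}$ with $|A^{(1)}_i \cap V_j| \ge \lfloor(|V_j|+1)/q_1\rfloor - 1$ and $|V_j \setminus \bigcup_i A^{(1)}_i| \le q_1 - 1$. The bound $|V_j|+1 \ge q_1 q_2$ forces $|A^{(1)}_i \cap V_j| \ge q_2 - 1$, which is precisely the hypothesis needed to apply the second result to each $A^{(1)}_i$ (viewed as a sub-path with the induced partition $A^{(1)}_i \cap V_1, \dots, A^{(1)}_i \cap V_m$). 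Doing so produces pairwise disjoint $s_2$-stable sets $A^{(2,i)}_1, \dots, A^{(2,i)}_{q_2}$ for each $i$, and I would then set $S_{(i,k)} := A^{(2,i)}_k$ for $(i,k) \in \{1, \dots, q_1\} \times \{1, \dots, q_2\}$.

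For fairness I would chain the two almost-fair estimates via the floor identity $\lfloor \frac{1}{c} \lfloor \frac{a}{b} \rfloor \rfloor = \lfloor \frac{a}{bc} \rfloor$ recalled just before the statement:
$$|S_{(i,k)} \cap V_j| \ge \Big\lfloor \tfrac{|A^{(1)}_i \cap V_j|+1}{q_2} \Big\rfloor - 1 \ge \Big\lfloor \tfrac{1}{q_2} \Big\lfloor \tfrac{|V_j|+1}{q_1} \Big\rfloor \Big\rfloor - 1 = \Big\lfloor \tfrac{|V_j|+1}{q} \Big\rfloor - 1.$$
Nesting the two coverage slacks gives $|V_j \setminus \bigcup_{(i,k)} S_{(i,k)}| \le (q_1 - 1) + q_1(q_2 - 1) = q - 1$, as required.

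The stability claim is the main obstacle. For the unmarked version it is quick: if $A^{(1)}_i = \{a_1 < \dots < a_N\}$ in $P$ then $a_{t+1} - a_t \ge s_1$ by $s_1$-stability, and two elements of $A^{(2,i)}_k$ whose indices in $A^{(1)}_i$ differ by at least $s_2$ are therefore at distance $\ge s_1 s_2 = s$ in $P$ by a telescoping sum, so $S_{(i,k)}$ is $s$-stable. For the weak version, let $U = \bigcup_i A^{(1)}_i$ and let $\varphi\colon U \to \{1, \dots, (w_1-1)n'+1\}$ be the order-preserving bijection; weak $w_1$-stability places exactly one $\varphi$-image of $A^{(1)}_i$ into each block $B_k = \{(w_1-1)(k-1)+1, \dots, (w_1-1)k+1\}$. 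The crux is establishing $c_{\alpha + s_2} - c_\alpha \ge (s_2-1)(w_1-1) + 1$ for the $\varphi$-images $c_1 < c_2 < \dots$ of $A^{(1)}_i$. Here one observes that consecutive block-indices $k_\alpha$, $k_{\alpha + 1}$ differ by $1$ or $2$, the latter occurring exactly when $c_\alpha$ sits at a shared block boundary. A case split gives the bound: if $c_\alpha$ is interior to its block, then $c_\alpha \le (w_1-1)k_\alpha$ and $k_{\alpha + s_2} \ge k_\alpha + s_2$ combine to give exactly the sharp $(s_2-1)(w_1-1)+1$; if $c_\alpha$ lies on a shared boundary, then $k_{\alpha + 1} = k_\alpha + 2$ forces a strictly larger separation. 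Since $\varphi^{-1}$ is order-preserving into $\{1, \dots, N\}$, this inequality transfers to the original path, yielding $[(s_2-1)(w_1-1)+1]$-stability of each $S_{(i,k)}$ and completing the construction.
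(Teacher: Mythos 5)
Your proposal is correct and follows the same route as the paper's proof: an outer application of the $q_1$-hypothesis, an inner application of the $q_2$-hypothesis inside each resulting block, the chained floor identity for the fairness estimate, and the coverage count $(q_1-1) + q_1(q_2-1) = q-1$. The only place you add material beyond the paper is the weak-stability step, where the paper gives a one-sentence geometric picture (``between any two consecutive elements of $S^{(t)}_i$ there are $s_2-1$ overlapping blocks of size $w_1$'') and you instead spell out the block-index case analysis; both arguments are equivalent, and your case split (with $k_\alpha$ read as the smallest block index for which $c_\alpha$ is the representative, so that a ``shared boundary'' is always the right end $(w_1-1)k_\alpha+1$ and forces $k_{\alpha+1}\ge k_\alpha+2$) correctly recovers the sharp constant $(s_2-1)(w_1-1)+1$.
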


\begin{proof}
	Let $P$ be a path whose vertex set is partitioned into $V_1 \sqcup \dots \sqcup V_m$ with $|V_j| \ge q-1$ for all~$j$. Find an almost fair representation by $q_1$ pairwise disjoint $s_1$-stable sets $S'_1, \dots, S'_{q_1}$. For each $t \in \{1, 2, \dots, q_1\}$ let $P'_t$ be the path that connects the vertices of $S'_t$ in the same order that they are traversed by~$P$. The vertex set $S'_t$ of $P'_t$ is partitioned into $(S'_t \cap V_1) \sqcup \dots \sqcup (S'_t \cap V_m)$. By definition of almost fair representation 
	$$|S'_t \cap V_j| \ge \Big\lfloor\frac{|V_j| +1}{q_1}\Big\rfloor -1 \ge  \Big\lfloor\frac{q_1q_2}{q_1}\Big\rfloor -1 = q_2 -1$$
	for each~$j$. Thus there is an almost fair representation for the partition $(S'_t \cap V_1) \sqcup \dots \sqcup (S'_t \cap V_m)$ by $s_2$-stable sets $S^{(t)}_1, \dots, S^{(t)}_{q_2}$ in~$P'_t$.
	
	We claim that the collection of sets $S^{(t)}_1, \dots, S^{(t)}_{q_2}$ where $t$ ranges over $\{1,2,\dots,q_1\}$ considered as sets of vertices of~$P$ are an almost fair representation of $V_1 \sqcup \dots \sqcup V_m$ by $s$-stable sets. Certainly, the sets $S^{(t)}_i$ are $s$-stable as $s_2$-stable sets in the $s_1$-stable set~$S'_t$. If the sets $S'_t$ are weakly $w_1$-stable, then between any two consecutive elements of $S^{(t)}_i$ there are $s_2-1$ blocks of size~$w_2$, where consecutive blocks overlap in one vertex. Thus in this case the sets $S^{(t)}_i$ are $[(s_2-1)(w_1-1)+1]$-stable. The collection $S^{(t)}_1, \dots, S^{(t)}_{q_2}$ is an almost fair representation of $(S'_t \cap V_1) \sqcup \dots \sqcup (S'_t \cap V_m)$, so
	$$|S^{(t)}_i \cap V_j| = |S^{(t)}_i \cap (S'_t \cap V_j)| \ge \Big\lfloor\frac{|S'_t \cap V_j| +1}{q_2}\Big\rfloor -1 \ge \Big\lfloor\frac{\lfloor\frac{|V_j| +1}{q_1}\rfloor}{q_2}\Big\rfloor -1 = \Big\lfloor\frac{|V_j| +1}{q_1q_2}\Big\rfloor -1 = \Big\lfloor\frac{|V_j| +1}{q}\Big\rfloor -1$$
	for all~$i$, all~$j$, and all~$t$.
	
	Lastly, for each fixed~$j$ the sets $S'_1, \dots, S'_{q_1}$ cover all but at most $q_1-1$ vertices of~$V_j$. If we now further fix $t \in \{1,2,\dots,q_1\}$, then the sets $S^{(t)}_1, \dots, S^{(t)}_{q_2}$ cover all but at most $q_2-1$ vertices of~${V_j \cap S'_t}$. Thus the collection of sets $S^{(t)}_i$ with $i \in \{1,2,\dots, q_2\}$ and $t \in \{1,2,\dots,q_1\}$ covers all but at most $q_1-1 + q_1(q_2-1) = q-1$ vertices of~$V_j$.
\end{proof}

\section{Relation to Kneser hypergraphs}
\label{sec:kneser}

Aharoni et al.~\cite{aharoni2017} show that almost fair representations by $q$-stable sets for cycles are related to chromatic numbers of certain Kneser hypergraphs. Here we extend their reasoning to show that almost fair \emph{splittings} by $q$-stable sets for paths are also related to Kneser hypergraphs. The relation is not as straightforward as one might hope. In particular, we are unable to extend our arguments to cycles or even more general sparse graphs.

A $q$-uniform hypergraph $H$ is a set of $q$-element subsets of some ground set~$X$. The sets in $H$ are called hyperdges and $X$ is the vertex set of~$H$. The (weak) chromatic number $\chi(H)$ of $H$ is the least number of colors needed to color the vertices $X$ such that every hyperedge of $H$ has elements of at least two distinct colors. A Kneser hypergraph has as vertex set some system of sets $\mathcal F$ and a hyperedge for any $q$ pairwise disjoint sets $F_1, \dots, F_q \in \mathcal F$. The Kneser hypergraphs $\KG^q(n,k)$ whose vertices are the $k$-element subsets of $\{1,2,\dots, n\}$ with hyperedges $\{F_1, \dots, F_q\}$ for pairwise disjoint $k$-element sets have received particular attention. Alon, Frankl, and Lov\'asz~\cite{alon1986} showed that $\chi(\KG^q(n,k)) = \lceil\frac{n-q(k-1)}{q-1}\rceil$ for $n \ge qk$. Ziegler~\cite{ziegler2002} and Alon, Drewnowski, and \L uczak~\cite{alon2009} conjectured that the subhypergraph $\KG^q(n,k)_{q-\mathrm{stab}}$ whose vertex set consists of only those $k$-element subsets $F$ whose elements form a $q$-stable set in the cycle on~$\{1,2,\dots,n\}$ still has the same chromatic number. If $F$ is only required to be $q$-stable in the path then the corresponding Kneser hypergraph is denoted by~$\KG^q(n,k)_{\widetilde{q-\mathrm{stab}}}$. This was introduced by Meunier~\cite{meunier2011} together with the conjecture that the chromatic number of $\KG^q(n,k)_{\widetilde{q-\mathrm{stab}}}$ is $\lceil\frac{n-q(k-1)}{q-1}\rceil$ for $n \ge qk$ as well.

That in fact $\KG^q(n,k)_{q-\mathrm{stab}} = \lceil\frac{n-q(k-1)}{q-1}\rceil$ for $n \ge qk$ is known for $q$ a power of two. For $q=2$ this is a classical result of Schrijver~\cite{schrijver1978}, and Alon, Drewnowski, and Luczak~\cite{alon2009} show that if this is true for $q'$ and $q''$ then it also holds for their product~${q = q'q''}$.

We first formulate a weakening of Conjecture~\ref{conj:AM} that does not require that the almost fair splitting is balanced. We then relate this conjecture to Meunier's conjecture on the chromatic number of~$\KG^q(n,k)_{\widetilde{q-\mathrm{stab}}}$.

\begin{conjecture}
\label{conj:AM-unbalanced}
    Given a positive integer $q$ and a path $P$ whose vertex set is partitioned into $m$ subsets $V_1, \dots, V_m$ of sizes at least $q - 1$, there always exist pairwise disjoint $q$-stable sets $S_1,\dots , S_q$ covering all vertices but $q - 1$ in each $V_j$, and satisfying $$|S_i \cap V_j| \ge \Big\lfloor \frac{|V_j| + 1}{q} \Big\rfloor - 1$$ for all $i \in \{1,2,\dots,q\}$ and all $j \in \{1,2,\dots,m\}$.
\end{conjecture}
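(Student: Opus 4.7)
The plan is to reduce Conjecture~\ref{conj:AM-unbalanced} to the nonexistence of an $\mathfrak S_q$-equivariant map of the form required by Theorem~\ref{thm:main-eq}, and then to identify that nonexistence statement with Meunier's chromatic-number conjecture for $\KG^q(N,k)_{\widetilde{q-\mathrm{stab}}}$. Given a path $P$ on vertex set $\{1,2,\dots,N\}$ with partition $V_1 \sqcup V_2 \sqcup \dots \sqcup V_m$ into parts of size at least $q-1$, let $G$ be the graph on $\{1,2,\dots,N\}$ whose edges join $i$ and $j$ whenever $0 < |i-j| < q$. The independent sets of $G$ are precisely the $q$-stable subsets of $P$, so Theorem~\ref{thm:main-eq} furnishes the desired almost fair splitting the moment we produce an integer $n \ge m+1$ with $N \le (q-1)n+1$ and rule out every $\mathfrak S_q$-equivariant map $\Ind(G)^{*q}_\Delta \longrightarrow S(W_q^{\oplus n})$. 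When $N$ is not already of the form $(q-1)n+1$, one appends a single dummy part of size between $q$ and $2q-2$, exactly as in the proof of Theorem~\ref{thm:corollaries}(\ref{it:sparse+path}), and discards the auxiliary vertices at the end.

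Next I would transport the resulting topological question into the language of Kneser hypergraphs via a Dol'nikov--Sarkaria-style reduction. The vertex set of $\KG^q(N,k)_{\widetilde{q-\mathrm{stab}}}$ consists of $k$-element $q$-stable subsets of the path, and its hyperedges are precisely the $q$-tuples of such subsets that are pairwise disjoint---that is, the top-dimensional faces of the deleted join of the complex of $q$-stable $k$-subsets. An $\mathfrak S_q$-equivariant map from $\Ind(G)^{*q}_\Delta$ to $S(W_q^{\oplus n})$ would, after restriction to the deleted join of the $k$-element faces and pullback of an $\mathfrak S_q$-invariant cellular decomposition of the target, produce a proper weak coloring of $\KG^q(N,k)_{\widetilde{q-\mathrm{stab}}}$ with at most $n$ colors. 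Choosing $k$ so that $N$ is as close as possible to $qk$, Meunier's conjecture that $\chi(\KG^q(N,k)_{\widetilde{q-\mathrm{stab}}}) = \lceil (N-q(k-1))/(q-1) \rceil$ would then forbid such a coloring, rule out the equivariant map, and close the argument via Theorem~\ref{thm:main-eq}. For $q = 2^t$, Schrijver's theorem combined with the Alon--Drewnowski--\L uczak product construction supplies the chromatic input unconditionally, so this plan would recover the Alishahi--Meunier result for powers of two from our framework.

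The main obstacle is that Meunier's conjecture is itself open outside of powers of two. A direct topological attack---attempting to prove nonexistence of the equivariant map for a general prime power $q$ by an obstruction-theoretic or Fadell--Husseini index computation on $\Ind(G)^{*q}_\Delta$---runs into the difficulty that this complex is extremely thin, with facets of size only roughly $N/q$, so standard connectivity-based tools such as Volovikov's lemma do not apply off the shelf; one would need to exploit the combinatorics of $q$-stable sets in an essential way rather than relying on dimension alone. For $q$ with several distinct odd prime factors, neither the chromatic-number route nor the direct topological route is likely to succeed without substantially new ideas, and the state of affairs here mirrors the status of Meunier's conjecture itself.
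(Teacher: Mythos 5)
Your plan is a genuinely different route from the paper's, but it contains a critical gap at exactly the step it relegates to a single sentence. The paper's proof of Theorem~\ref{thm:kneser} never touches Theorem~\ref{thm:main-eq} or equivariant maps: after reducing to the case $|V_j| = qk_j - 1$ by padding, it directly colors the $q$-stable $k$-sets (with $k = \sum (k_j-1)$) by $C(S) = \min(\{j : |S\cap V_j|\ge k_j\}\cup\{m+1\})$, observes this uses only $m+1$ colors while $\chi(\KG^q(n,k)_{\widetilde{q\text{-stab}}}) = m+2$, and extracts the splitting from the monochromatic hyperedge that must exist. This is a Dol'nikov-style argument and needs nothing from the configuration-space machinery.

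Your route instead tries to feed Conjecture~\ref{conj:AM-unbalanced} through Theorem~\ref{thm:main-eq}, which requires ruling out an $\mathfrak S_q$-map $\Ind(G)^{*q}_\Delta \to S(W_q^{\oplus n})$ for the $q$-stability graph $G$, and you propose to derive that nonexistence from Meunier's chromatic-number conjecture. This reverses the standard direction of implication. The Dol'nikov/Sarkaria/Alon--Frankl--Lov\'asz machinery (and the paper's own cited \cite{frick2018}) proves: from a proper $c$-coloring of the Kneser hypergraph one \emph{constructs} an equivariant map into a sphere of the corresponding dimension, and then a Borsuk--Ulam-type theorem forces $c$ to be large. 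The contrapositive reads ``no equivariant map $\Rightarrow$ high chromatic number,'' not ``high chromatic number $\Rightarrow$ no equivariant map.'' Your sentence about ``pullback of an $\mathfrak S_q$-invariant cellular decomposition of the target'' asserting that an equivariant map yields a proper $n$-coloring is precisely the converse, and it is neither standard nor justified: the Kneser hypergraph $\KG^q(N,k)_{\widetilde{q\text{-stab}}}$ only sees the $(k-1)$-skeleton of $\Ind(G)$, the deleted join contains faces of all sizes, and there is no general recipe for extracting a coloring from a map to $S(W_q^{\oplus n})$ when $q > 2$. There is also a parameter mismatch: with $N = (q-1)n+1$ fixed, $\lceil (N - q(k-1))/(q-1)\rceil > n$ forces $k \le 1$, which trivializes the hypergraph; ``choosing $k$ so that $N$ is close to $qk$'' makes the chromatic number \emph{small}, not large, and gives you no obstruction to the map. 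Finally, your $G$ (with $|i-j| < q$ as edges) is far too dense for Engstr\"om's criterion ($2N(v)+N^2(v) \approx 6(q-1) \not< q$), and Hell's theorem (Theorem~\ref{thm:constraint-path}) only applies to the path itself, not to this denser graph, so you have no independent source for the required nonexistence statement. The lesson from the paper's approach is that for this particular conjecture the Kneser connection is cleaner if one bypasses Theorem~\ref{thm:main-eq} entirely and works directly with a well-chosen coloring, because that is the direction in which the known implications run.
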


We can now adapt the reasoning of Aharoni et al.~\cite{aharoni2017}. As a special case, we obtain a new proof of Conjecture~\ref{conj:AM} for $q$ a power of two.

\begin{theorem}
\label{thm:kneser}
    Let $q \ge 2$ be an integer. If $\chi(\KG^q(n,k)_{\widetilde{q-\mathrm{stab}}}) = \Big\lceil \frac{n-q(k-1)}{q-1} \Big\rceil$ holds for all integers $k \ge 1$ and $n \ge qk$ then Conjecture~\ref{conj:AM-unbalanced} holds for~$q$. For $q=2$ we can moreover guarantee that the independent sets $S_1$ and $S_2$ that form the almost fair splitting are balanced. In particular, Conjecture~\ref{conj:AM} holds for $q$ a power of two.
\end{theorem}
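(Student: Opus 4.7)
The plan is to adapt the Kneser-hypergraph coloring argument of Aharoni et al.\ to the splitting setting. Given the path $P$ on $N$ vertices with partition $V_1 \sqcup \dots \sqcup V_m$ and $|V_j| \ge q-1$, set $a_j = \lfloor (|V_j|+1)/q \rfloor - 1$. First I would pad $P$ with dummy vertices so that each part (retaining the name $V_j$) becomes of size $qk_j - 1$ with $k_j = a_j + 1$; the dummies will be placed in tight consecutive clusters of length at most $q-1$ separated by spacer blocks of length $q - 1$, so that each $q$-stable set in the padded path meets each $V_j$'s dummies in at most one vertex. This ensures that removing dummies from a splitting of the padded partition preserves both the bound $|S_i \cap V_j| \ge a_j$ and the coverage condition. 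Then I would set $k = \sum_j a_j$ and consider the Kneser hypergraph $H = \KG^q(N', k)_{\widetilde{q-\mathrm{stab}}}$ on the padded path of length $N' = qA + (q-1)m$ with $A = \sum_j a_j$. By the chromatic number hypothesis, $\chi(H) = \lceil ((q-1)m + q)/(q-1) \rceil = m + 2$.

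Next I would color each $q$-stable $k$-set $F$ by one of $m + 1$ colors in $\{1, \dots, m, *\}$: set $\phi(F) = *$ if $|F \cap V_j| \ge a_j$ for every $j$, and $\phi(F) = \min\{j : |F \cap V_j| < a_j\}$ otherwise. Since $\chi(H) > m + 1$, the coloring $\phi$ is not proper, so there exist $q$ pairwise disjoint $q$-stable $k$-sets $F_1, \dots, F_q$ of common color. In the ``fair'' case (common color $*$), each $F_i$ has $|F_i \cap V_j| = a_j$ for all $j$---since $|F_i| = k = \sum_j a_j$ forces equality---and so $\sum_i |F_i \cap V_j| = qa_j = |V_j| - (q-1)$, giving the desired almost fair splitting of the padded partition; removing the dummies yields the splitting of the original partition. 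In the ``color $j$'' case, all $F_i$ satisfy $|F_i \cap V_j| \le a_j - 1$ while $|F_i \cap V_{j'}| \ge a_{j'}$ for every $j' < j$. Summing these with the bounds $|F_i \cap V_{j'}| \le qk_{j'} - 1$ and the total $\sum_{j'}\sum_i |F_i \cap V_{j'}| = qk$, one obtains $(q-1)m \ge 2q - 1$, which directly contradicts $m \le 2$; for $m \ge 3$ an inductive reduction on $m$ will be invoked (restrict to the sub-path on $V \setminus V_j$ with the $(m-1)$-part partition, apply the theorem there, and re-extend to $V_j$ by adjoining suitably spaced $V_j$-vertices to each $S_i'$ without breaking $q$-stability).

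For $q = 2$, the balance $\bigl||S_1|-|S_2|\bigr| \le 1$ will follow from the fair case together with an analysis of how dummies distribute between $F_1$ and $F_2$: in the padded setup $|F_1| = |F_2| = k$, and by the vertex-criticality of Schrijver graphs one can swap vertices between $F_1$ and $F_2$ along near-critical edges so that the dummy counts in $F_1$ and $F_2$ differ by at most one. For $q$ a power of two, the Alishahi--Meunier product theorem then iterates the $q = 2$ case: a balanced almost fair splitting into $q'$ sets composed with a balanced almost fair splitting into $q''$ sets on each side (the latter viewed as a path in its induced order) gives a balanced almost fair splitting into $q'q''$ sets, so Conjecture~\ref{conj:AM} follows for all $q = 2^t$.

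The hard part will be the ``color $j$'' monochromatic case for $m \ge 3$: the simple pigeonhole counting does not yield a direct contradiction, so either a careful induction on $m$ (handling the subtleties of re-inserting $V_j$-vertices into the smaller splitting) or a more refined coloring scheme exploiting additional positional structure along the path---of the kind used by Aharoni et al.\ in their original fair-representation argument---will be required. A careful dummy placement in the padding step, and control over how dummies distribute between $F_1$ and $F_2$ for the $q = 2$ balance conclusion, are the other delicate points of the argument.
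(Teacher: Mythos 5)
The coloring you propose is the wrong one, and that is the source of the gap you yourself flag. You color a $q$-stable $k$-set $F$ by the least index $j$ with $|F \cap V_j| < a_j$ (defaulting to $*$), i.e.\ by where $F$ is \emph{too small}. With that convention, a monochromatic hyperedge of color $j \le m$ gives you $q$ disjoint sets each missing $V_j$, and as your own count shows this is only contradictory when $(m-1)(q-1) \ge q$ fails, i.e.\ $m \le 2$; for $m \ge 3$ you are stuck. The paper's coloring runs in the opposite direction: $C(S) = \min\bigl(\{\,j : |S \cap V_j| \ge k_j\,\} \cup \{m+1\}\bigr)$, i.e.\ the least index where $S$ is \emph{too big}. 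Then a monochromatic hyperedge of color $j \le m$ consists of $q$ pairwise disjoint sets each with $|S_i \cap V_j| \ge k_j$, forcing $|V_j| \ge qk_j$, which is flatly impossible since $|V_j| = qk_j - 1$. That single sign flip removes the entire problematic case; there is no need for induction on $m$, no refined coloring, and no sub-path trick. (Your proposed induction is also shaky on its own terms: $V \setminus V_j$ is a subsequence, not a sub-path, so $q$-stability in it does not translate to $q$-stability in $P$, and re-inserting $V_j$-vertices ``suitably spaced'' is a nontrivial combinatorial problem you have not solved.)

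The remaining pieces of your outline agree with the paper in spirit: pad to sizes $qk_j - 1$ (the paper just appends a block of $t_j - 1$ consecutive dummies per $j$ at the end of the path; $q$-stability forces at most one dummy per block in any $S_i'$, and your ``spacer'' elaboration is unnecessary), compute $\chi = m+2$, use $|F_i| = k$ to upgrade the inequalities to equalities, strip dummies, and invoke the Alishahi--Meunier product lemma for prime powers. For the $q=2$ balance you gesture at ``vertex-criticality of Schrijver graphs'' to swap vertices between $F_1$ and $F_2$; the paper instead does an explicit combinatorial rebalancing on the path: it compares $\ell_i = |S_i' \cap \bigcup_j B_j|$, and if $\ell_2 > \ell_1 + 1$ it locates $\ell_2 - \ell_1 - 1$ dummies in $B_j$ immediately succeeding a vertex of $S_2'$ and lying in neither $S_1'$ nor $S_2'$, then deletes one vertex of $S_1 \cap V_j$ for each such dummy and checks the almost-fair bounds still hold. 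Your Schrijver-criticality idea is not developed enough to assess, but the explicit path argument is both simpler and clearly correct, so I would replace your sketch by it.
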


\begin{proof}
    First assume that all sets $V_j$ have size $qk_j-1$ for some integers $k_1, \dots, k_m$. Thus the path has $n = \sum (qk_j-1)$ vertices in total. Let $k = \sum (k_j-1)$. Notice that
    $$\Big\lceil \frac{n-q(k-1)}{q-1} \Big\rceil = \Big\lceil \frac{\sum (qk_j-1)-\Big[\sum (qk_j-q)-q\Big]}{q-1} \Big\rceil = \Big\lceil \frac{m(q-1)+q}{q-1}\Big\rceil = m+2.$$
    
    Let $S \subset \{1,2,\dots,n\}$ be a $q$-stable $k$-element set. Define the color $C(S)$ of~$S$ by $$C(S) = \min (\{j \in \{1,2,\dots,m\} \: : \: |S \cap V_j| \ge k_j\} \cup \{m+1\}).$$ Let $S_1, \dots, S_q$ be a monochromatic hyperedge of~$\KG^q(n,k)_{\widetilde{q-\mathrm{stab}}}$. If they all had the same color $C(S_i) = j \le m$ then this would imply that $V_j$ contains at least $qk_j$ elements, which it does not. Thus $C(S_i) = m+1$ for all~$i$. This implies $|S_i \cap V_j| \le k_j-1$ for all~$i$ and all~$j$. Since all $S_i$ have size $k = \sum (k_j-1)$, we have that $|S_i \cap V_j| = k_j-1$ for all~$i$ and all~$j$. This proves Conjecture~\ref{conj:AM-unbalanced} for this specific case.
    
    It remains to show that it is sufficient to prove Conjecture~\ref{conj:AM-unbalanced} in this specific case. Suppose that $|V_j| = qk_j-t_j$ for integers $k_j$ and $t_j \in \{1,2,\dots, q\}$. Now for each $j$ modify the path by adding a block $B_j$ of $t_j-1$ consecutive vertices to the end of the path. Define an enlarged partition of this elongated path by $V'_j = V_j \cup B_j$. Then each $V'_j$ has size~${qk_j-1}$. For this modified path we can find an almost fair splitting by $q$-stable sets $S'_1, \dots, S'_q$. Each $S'_i$ contains at most one of the new vertices $B_j$ by $q$-stability. The sets $S_i = S'_i \setminus \bigcup_j B_j$ are still pairwise disjoint sets and $q$-stable in the original path.
    
    For $q=2$ and all $V_j$ of odd size the first part of the proof actually guarantees that $S_1$ and $S_2$ have the same cardinality. So this proves Conjecture~\ref{conj:AM} in this case. If the $V_j$ have arbitrary size we can still reduce to the case that $|V_j| = 2k_j-1$ for all~$j$, while maintaining that the cardinalities of $S_1$ and $S_2$ differ by at most one. As above we add a block $B_j$ (which for $q=2$ is either empty or consists of one vertex) to each $V_j$ to obtain $V'_j$ of size~${2k_j-1}$. We add these additional vertices at the end of the path. We now obtain two independent sets $S'_1$ and $S'_2$ of the same cardinality that are an almost fair splitting of the~$V'_j$. The sets satisfy $|S'_i \cap V_j| = k_j-1$. 
    
    Let $\ell_1 = |S'_1 \cap \bigcup_j B_j|$ and $\ell_2 = |S'_2 \cap \bigcup_j B_j|$, and suppose w.l.o.g.~that $\ell_1 \le \ell_2$. Define as before $S_i = S'_i \setminus \bigcup_j B_j$. This is still an almost fair splitting by independent sets. If $\ell_2 \le \ell_1+1$, we are done. Otherwise we need to remove another $\ell_2-\ell_1-1$ vertices from the set~$S_1$, without violating the condition that $S_1, S_2$ is an almost fair splitting. We do this in the following way: For every vertex in $S'_2 \cap \bigcup_j B_j$, consider the next vertex on the path (unless the vertex was the last vertex of the path). This vertex is in $\bigcup_j B_j$, and it is not in $S'_2$ by independence. It is in $S'_1$ at most $\ell_1$ times. Additionally excluding the case that one vertex in $S'_2 \cap \bigcup_j B_j$ might be the last vertex of the path, we can find $\ell_2-\ell_1-1$ instances, where a vertex of $S'_2 \cap \bigcup_j B_j$ is succeeded by a vertex $v$ that is in $\bigcup_j B_j \setminus (S'_1 \cup S'_2)$. The vertex $v$ is in $B_j$ for some~$j$. Now remove an arbitrary vertex in $S_1 \cap V_j$ from the set~$S_1$. Repeating this for all such vertices~$v$ results in a subset $\widetilde S_1 \subset S_1$. By construction the cardinalities of $\widetilde S_1$ and $S_2$ differ by at most one. Both sets are still independent. We have to check that they constitute an almost fair splitting.
    
    For those $j$ where we removed a vertex from $S_1 \cap V_j$ to obtain the set~$\widetilde S_1$, we have that $|\widetilde S_1 \cap V_j| = k_j-2$. For such a $j$ the set $B_j$ was nonempty, so $|V_j| = 2k_j -2$ and thus $\lfloor\frac{|V_j|+1}{2}\rfloor -1=k_j-2$. By construction $|S_2 \cap V_j| = k_j-1$, and so the sets $\widetilde S_1$ and $S_2$ cover all but one vertex of~$V_j$. This shows that $\widetilde S_1, S_2$ is a balanced almost fair splitting. The case of $q$ a power of two follows from the $q=2$ case by~\cite[Prop.~1]{alishahi2017}.
\end{proof}

\begin{remark}
	Similar reasoning to that in the proof above shows if $\chi(\KG^q(n,k)_{\widetilde{q-\mathrm{stab}}}) = \Big\lceil \frac{n-q(k-1)}{q-1} \Big\rceil$ holds for all integers $k \ge 1$ and $n \ge qk$ then the stronger Conjecture~\ref{conj:AM} holds for~$q$, provided that all $V_j$ are of size $qk_j-1$ or~$qk_j$.
\end{remark}

\section{Concluding remark}
\label{sec:rem}

\begin{remark}
	Our methods do not make use of the fact that we are concerned with independent sets in a graph. Thus our results extend to the following more general setting: Let $K$ be a simplicial complex on vertex set $V_1 \sqcup V_2 \sqcup \dots \sqcup V_m$. A \emph{fair splitting by $q$ faces} consists of pairwise disjoint faces $S_1, \dots, S_q$ of $K$ that satisfy $|S_i \cap V_j| \ge \lfloor\frac{|V_j|}{q}\rfloor$ for all~$i$ and~$j$. Similarly, pairwise disjoint faces $S_1, \dots, S_q$ of $K$ are called an \emph{almost fair splitting by $q$ faces} if $|S_i \cap V_j| \ge \lfloor\frac{|V_j|+1}{q}\rfloor-1$ for all~$i$ and~$j$ and $|V_j \setminus \bigcup_i S_i| \le q-1$ for all~$j$. Theorem~\ref{thm:main-eq} immediately generalizes to: If $K$ has at most $(q-1)n+1$ vertices for some integer $n \ge m+1$ such that there is no $\mathfrak S_q$-equivariant map $K^{*q}_\Delta \longrightarrow S(W_q^{\oplus n})$. Then $K$ admits an almost fair splitting by $q$ faces.
\end{remark}

\section*{Acknowledgements}

This research was performed during the Summer Program for Undergraduate Research 2018 at Cornell University. The authors are grateful for the excellent research conditions provided by the program. The authors would like to thank the other participants of the summer program and Thomas B\aa \aa th for helpful conversations.


\end{document}